\documentclass[a4paper]{amsart}

\usepackage[english]{babel}
\usepackage[utf8]{inputenc}
\usepackage{amsmath, amssymb, amsthm}
\usepackage[]{tikz}
\usepackage[]{mathtools}

\newtheorem{theorem}{Theorem}
\newtheorem{definition}{Definition}
\theoremstyle{plain}
\newtheorem{corollary}{Corollary}

\newtheorem{lemma}{Lemma}
\newtheorem{proposition}{Proposition}

\newtheorem{example}{Example}

\newtheorem{remark}{Remark}

\newcommand{\cC}{{\mathcal{C}}}

\newcommand{\cS}{{\mathcal{S}}}

\newcommand{\Q}{{\mathbb{Q}}}
\newcommand{\Z}{{\mathbb{Z}}}
\newcommand{\R}{{\mathbb{R}}}
\newcommand{\bP}{{\mathbb{P}}}

\newcommand{\fp}{{\mathfrak p}}

\title[Blurred combinatorics in resolution of singularities]{Blurred combinatorics in resolution of singularities: (a little) beyond the characteristic polytope}

\author{Helena Cobo}
\email{helenacobo@gmail.com}
\author{M. J. Soto}
\address{Departamento de \'Algebra, Universidad de Sevilla}
\email{soto@us.es}
\author{Jos\'e M. Tornero}
\address{Departamento de \'Algebra \& IMUS, Universidad de Sevilla}
\email{tornero@us.es}
\subjclass[2010]{14H20, 32S25}
\keywords{Resolution of surface singularities, Newton polygon, equimultiple locus, blowing-up.}

\date{\today}

\begin{document}

\begin{abstract}
We introduce a variation of the well-known Newton-Hironaka polytope for algebroid hypersurfaces. This combinatorial object is a perturbed version of the original one,  parametrized by a real number $\varepsilon \in \R_{\geq 0}$. For well-chosen values of the parameter, the objects obtained are very close to the original, while at the same time presenting more (hopefully interesting) information in a way which does not depend on the choice of parameter.
\end{abstract}

\maketitle

\section{Introduction}

One very powerful idea in singularity theory is Hironaka's characteristic polytope. 
The fact that it is possible to attach a finite, combinatorial object to an equation (an ideal, actually) and that it is possible to read geometric properties off such combinatorial object, led Hironaka to state that \emph{reduction of singularities is sharpening of polytopes}~\cite{H1}. 
Indeed, it is possible to see transformations of the ideal  as modifications of the polytope. 
Hironaka himself turned these ideas into his so-called {\em polyhedra games}, which were solved by Spivakovsky \cite{S1,S2} (see \cite{HS1} for a different, modern take).

Hironaka's initial research in \cite{H1} was enough for the purpose of the reduction of singularities of surfaces. However, it is also stated in~\cite{H1} that the case of bigger dimension was not fully worked out. A lot of research was carried out in the following years, both in using the device as a control tool for the resolution of singularities (see \cite{Moh}, or a more recent example in \cite{HW}) and also in studying the object for its own sake (see for instance \cite{CP,Y}).

At the same time, there is some evidence that the original definition of Newton-Hironaka polytope does not cope with certain effective problems. For instance, Piedra and the third author showed in \cite{LZ0} that it is not possible to bound the number of blowing-ups of a surface that are needed before a decrease in multiplicity, following Levi-Zariski strategy, with just the Newton-Hironaka polygon.

The core problem is that some points of the polytope may correspond to more than one monomial in the equation (see below for the precise statements). Inspired by this, we  introduce here a modification of Hironaka's definition (which we will call the {\em perturbed} polytope) and we see that this new definition gives a finer grained control of the vertices and the faces while, in some sense, keeping very close to the original one. 

The paper is structured as follows:
\begin{itemize}
\item In Section $2$ we  review the classical notion of Newton-Hironaka polytope (denoted by $\Delta (F)$).
\item In Section $3$ we   present our  version of the polytope (denoted by  $\Delta_\varepsilon (F)$).
\item In Section $4$ we  focus on the compact faces of $\Delta_\varepsilon (F)$, which are  the interesting parts of the polytope, in terms of resolution complexity~\cite{HS2}.
\item In Section $5$ we  compare the polytopes $\Delta_\varepsilon (F)$ as $\varepsilon$ varies (these are the blurred combinatorics the title refers~to).
\item In Section $6$, finally, we see how the important data for resolution purposes (mainly, the tangent cone and some permissible linear varieties) can be read in $\Delta_\varepsilon (F)$. 
\end{itemize}
We hope that the introduction of this new object may open new avenues in exploring the relationship between combinatorics and resolution of singularities.

\section{Precedents: The Newton-Hironaka polytope}
\label{S2}

For the sake of completeness, we present here some  well-known technical results that will help to understand the origin of our point of view.

Let $\cS$ be an embedded algebroid hypersurface of dimension $m$ and multiplicity $n$ over an algebraically closed field $K$ of arbitrary characteristic, and $F$ an equation of $\cS$. After a change of variables, one can take $F$ to Weierstrass form with respect to a distinguished variable $Z$:
$$
F(X_1,\ldots,X_m,Z) = Z^n + \sum_{k=0}^{n-1} a_k(X_1,\dots,X_m)Z^k,
$$
where 
$$
a_k(X_1,\ldots,X_m)  = \sum_{i_1,\ldots,i_m} a_{i_1,\ldots,i_m}^{(k)} X_1^{i_1}\cdots X_m^{i_m}\in K[[X_1,\ldots,X_m]],
$$
with  $i_1+\cdots+i_m+k \geq n$ whenever $a_{i_1,\ldots,i_m}^{(k)} \neq 0$.

One of Hironaka's great insights in his work in singularities is that we can attach a combinatorial object to $\cS$. Write
$$
N(F) = \bigl\{ (i_1,\ldots,i_m,k) \in \Z_{\geq 0}^{m+1} \; | \; a_{i_1,\ldots,i_m}^{(k)} \neq 0 \bigr\} \cup \bigl\{ (0,\ldots,0,n) \bigr\},
$$
and
$$
N^\ast(F)=N(F)\setminus\bigl\{(0,\ldots,0,n)\bigr\}.
$$

\begin{definition}
The Newton-Hironaka polytope of $F$ is
$$
\Delta (F) = \operatorname{CH} \Biggl( \bigcup_{(i_1,\ldots,i_m,k) \in N^\ast(F)} \left[  \biggl( \frac{i_1}{n-k},\ldots, \frac{i_m}{n-k} \biggr) + \R_{\geq 0}^m \right]  \Biggr) \subset \R_{\geq0}^m,
$$
where $\operatorname{CH}$ stands for the convex hull.
\end{definition}

This object appeared for the first time in the famous Bowdoin lectures \cite{Bowdoin}. Note that $\Delta (F)$ can also be read in the following way: let $\rho$ denote the mapping
\begin{eqnarray*}
\rho : N^\ast(F) & \longrightarrow & \R_{\geq0}^m \times \{ 0 \} \\
(i_1,\ldots,i_m,k) & \longmapsto & \left( \frac{i_1}{n-k},\ldots, \frac{i_m}{n-k},0 \right)
 \end{eqnarray*}

Then $\rho$ corresponds to a projection from $(0,\ldots,0,n)$ to the coordinate hyperplane $\R^m \times \{0\}$, followed by a scaling centered in $(0,\ldots,0)$ of ratio $1/n$.

We will say that a given face of $\Delta(F)$ has dimension $r$ if it is contained in an $r$-dimensional linear variety, but not in a $(r-1)$-dimensional one. Faces of dimension $0$ will be called vertices and faces of dimension $m-1$ will be called facets, as customary.

\begin{remark}
For the rest of the paper, we will identify points $(a_1,\ldots,a_m,0)\in \R^m\times\{0\}$ with points~$(a_1,\ldots,a_m)\in \R^m$. Also, if $\rho(i_1,\ldots,i_m,k)=(a_1,\ldots,a_m)$, we will say that $(a_1,\ldots,a_m)$ represents the point $(i_1,\ldots,i_m,k)$, or, abusing notation, that it represents the monomial $X_1^{i_1}\cdots X_m^{i_m}Z^{k}$. Note that this relation is not one-to-one (see below for some examples).
\end{remark}

\begin{remark}
If we allow $Z$ to vary by changes of variable of the type
$$
Z  \longmapsto Z + \alpha(X_1,\ldots,X_m), \mbox{ with } \alpha \in K[[X_1,\ldots,X_m]] \mbox{ not a unit},
$$
we obtain a collection of polyhedra which has a minimal element in the sense of inclusion. Hironaka, in~\cite{H1}, called this object the characteristic polyhedron of the pair $\bigl(\cS, \{ X_1,\ldots,X_m\}\bigr)$ and it will be denoted by $\Delta \bigl(\cS,\{X_1,\ldots,X_m\}\bigr)$.
\end{remark}

\begin{definition}
A vertex $(p_1,\ldots,p_m)$ of $\Delta (F)$ is called contractible if there exists a change of variables $\varphi$
$$
Z  \longmapsto  Z + \lambda X_1^{b_1}\cdots X_m^{b_m}, \mbox{ with } \lambda \in K,
$$
such that
$$
\Delta \bigl(\varphi(F)\bigr) \subset \Delta (F) \setminus \bigl\{(p_1,\ldots,p_m) \bigr\}.
$$
In this case, $\varphi$ is called a contraction of the vertex $(p_1,\ldots,p_m)$.
\end{definition}

It is easy to remove all contractible vertices in characteristic zero. After applying the Tchirnhausen transformation to $F$,
$$
Z \longmapsto  Z - \frac{1}{n} a_{n-1}(X_1,\ldots,X_m),
$$
the resulting equation no longer has  contractible vertices. In fact, a given vertex $(p_1,\ldots,p_m)$ is contractible if and only if it represents {\em all} the monomials of the binomial $(Z + \lambda X_1^{b_1} \cdots X_m^{b_m})^n$, and this cannot happen since $a_{n-1}(X_1,\ldots,X_m)=0$. As becomes obvious from the equations associated to the different blowing-ups, in characteristic zero this situation will persist during the resolution process. In classical terms, $Z=0$ is a hyperplane with permanent maximal contact with the hypersurface $\cS$.

Hironaka proved in \cite{H1} (for arbitrary characteristic) that the vertices of $\Delta (F)$ are not contractible if and only if $\Delta (F) = \Delta \bigl(\cS, \{X_1,\ldots,X_m\}\bigr)$. From the previous discussion this is obvious in characteristic zero, but it involves a lot of work in positive characteristic.

\begin{example}\label{nhpolygon}
The Newton-Hironaka polygon $\Delta(F)$ may not be an accurate description of the equation $F$, even if it has no contractible vertices. In fact, different points of $N^\ast(F)$ might be identified by means of $\rho$. When this phenomenon happens with the vertices of $\Delta(F)$ it can be particularly misleading, since vertices are the most important points  to keep track of, because they encode the combinatorics of the evolution of the resolution process.

Moreover, in general it is not possible to get rid of all of these {\em hidden points}, so to say, by means of changes of variables in $K[[X_1,\ldots,X_m]]$. Think, for instance, of the equation
$$
F = Z^4 + (Y-X)^4Z^2 + (Y+3X)^8.
$$
(See Figure~\ref{fig1}.)
\end{example}

\begin{figure}[htbp]
    \centering
    \begin{tikzpicture}
    \tikzset{
        every point/.style = {circle, inner sep={1.5\pgflinewidth}, 
            opacity=1, draw, solid, fill
        },
        point/.style={insert path={node[every point, #1]{}}}, point/.default={},
        point name/.style = {insert path={coordinate (#1)}},
    }
    \fill[black!20] (3.000000,0.000000)  -- (3.000000,3.000000)  -- (0.000000,3.000000)  -- (0.000000,2.000000)  -- (2.000000,0.000000)  -- cycle;
    \draw[->] (0.000000,0.000000)  -- (3.000000,0.000000)  ;
    \draw[->] (0.000000,0.000000)  -- (0.000000,3.000000)  ;
    \draw[very thin] (0,0) -- (0,-3pt) node[below] {$0$};
    \draw[very thin] (1,0) -- (1,-3pt) node[below] {$1$};
    \draw[very thin] (2,0) -- (2,-3pt) node[below] {$2$};
    \draw[very thin] (0,1) -- (-3pt,1) node[left] {$1$}; 
    \draw[very thin] (0,2) -- (-3pt,2) node[left] {$2$}; 
        \draw (0.000000,2.000000)  [point];
        \draw (0.000000,2.000000)  [point];
        \draw (0.500000,1.500000)  [point];
        \draw (0.250000,1.750000)  [point];
        \draw (1.000000,1.000000)  [point];
        \draw (0.500000,1.500000)  [point];
        \draw (1.500000,0.500000)  [point];
        \draw (0.750000,1.250000)  [point];
        \draw (2.000000,0.000000)  [point];
        \draw (1.000000,1.000000)  [point];
        \draw (1.250000,0.750000)  [point];
        \draw (1.500000,0.500000)  [point];
        \draw (1.750000,0.250000)  [point];
        \draw (2.000000,0.000000)  [point];
    \end{tikzpicture}

    \caption{Newton-Hironaka polygon of $F = Z^4 + (Y-X)^4Z^2 + (Y+3X)^8$\label{fig1}}
\end{figure}

This apparent inconvenience appeared to us as the main flaw of Hironaka's characteristic polygon when it came to our original purpose of bounding the number of necessary  blowing-ups in order to drop the multiplicity. It bears repeating to finish this section that much work has been done in the study of $\Delta(F)$, from the seminal work in \cite{Bowdoin}, where the presentation might still be a bit imprecise, to more recent accounts as \cite{CP,Y}.

\section{The perturbed Newton-Hironaka polytope}
\label{Section3}

Starting from a Weierstrass equation $F$ and $N^\ast(F)$ as above, we are going to make a projection, much as Hironaka did with $\rho$, but with a small built-in perturbation. Given $\varepsilon > 0$, we define $\rho_\varepsilon$, the projection-scaling from $(0,\ldots,0,n+\varepsilon)$ with ratio $1/(n+\varepsilon)$, as:
\begin{eqnarray*}
\rho_\varepsilon : N^\ast(F) & \longrightarrow & \R_{\geq 0}^m \times \{ 0 \} \\
(i_1,\ldots,i_m,k) & \longmapsto & \biggl( \frac{i_1}{n-k+\varepsilon},\ldots, \frac{i_m}{n-k+\varepsilon},0 \biggr).
\qedhere
\end{eqnarray*}

First we will show how $\rho$ and $\rho_\varepsilon$ differ.

\begin{lemma}\label{lem.1}
Let $(i_1,\ldots,i_m,k) \in N^\ast(F)$. Then
$$
d \bigl( \rho(i_1,\ldots,i_m,k), \, \rho_\varepsilon (i_1,\ldots,i_m,k) \bigr) < \frac{\varepsilon(i_1+\cdots+i_m)}{(n-k)}.
$$
\end{lemma}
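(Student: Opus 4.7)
The proof should be a direct computation. The plan is to evaluate $\rho$ and $\rho_\varepsilon$ coordinate by coordinate, combine the fractions, and then apply two elementary estimates: the standard inequality between the $\ell^2$ and $\ell^1$ norms, and the fact that the denominator $(n-k+\varepsilon)$ exceeds $1$.

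Concretely, I would begin by writing
\[
\rho(i_1,\ldots,i_m,k)-\rho_\varepsilon(i_1,\ldots,i_m,k) = \left( \frac{i_1}{n-k}-\frac{i_1}{n-k+\varepsilon},\, \ldots,\, \frac{i_m}{n-k}-\frac{i_m}{n-k+\varepsilon}\right),
\]
and simplifying each coordinate to $\dfrac{i_j\varepsilon}{(n-k)(n-k+\varepsilon)}$. Factoring out the common scalar, the Euclidean distance becomes
\[
d\bigl(\rho(i_1,\ldots,i_m,k),\rho_\varepsilon(i_1,\ldots,i_m,k)\bigr) = \frac{\varepsilon}{(n-k)(n-k+\varepsilon)}\sqrt{i_1^2+\cdots+i_m^2}.
\]

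Then I would bound $\sqrt{i_1^2+\cdots+i_m^2}\leq i_1+\cdots+i_m$ (since the $i_j$ are nonnegative), turning the estimate into $\dfrac{\varepsilon(i_1+\cdots+i_m)}{(n-k)(n-k+\varepsilon)}$. To obtain the strict inequality claimed in the lemma, I would observe that for any point in $N^\ast(F)$ we have $k\leq n-1$, hence $n-k\geq 1$, so together with $\varepsilon>0$ the factor $n-k+\varepsilon$ is strictly greater than $1$; dividing by this factor only decreases the numerator strictly whenever $i_1+\cdots+i_m>0$. The sum is indeed positive, because a point $(0,\ldots,0,k)\in N^\ast(F)$ would require $k\geq n$, contradicting $k\leq n-1$; this is the one subtle point worth flagging, but it follows at once from the definition of $N^\ast(F)$.

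There is essentially no obstacle: the argument is a one-line computation followed by two well-known inequalities. The only item that needs a bit of care is justifying the strict inequality in the edge case where $\sqrt{\sum i_j^2}=\sum i_j$ (i.e.\ when a single $i_j$ is nonzero); that case is handled by the strict inequality $n-k+\varepsilon>1$ rather than by the $\ell^2$ versus $\ell^1$ comparison.
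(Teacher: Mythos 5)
Your proof is correct and takes essentially the same route as the paper: both compute the Euclidean distance directly as $\frac{\varepsilon\sqrt{i_1^2+\cdots+i_m^2}}{(n-k)(n-k+\varepsilon)}$ and then bound it using $\sqrt{i_1^2+\cdots+i_m^2}\leq i_1+\cdots+i_m$ together with $n-k+\varepsilon>1$. Your extra care about where the strict inequality comes from (the factor $n-k+\varepsilon>1$, not the $\ell^2$ versus $\ell^1$ comparison) and about why $i_1+\cdots+i_m>0$ for points of $N^\ast(F)$ simply makes explicit what the paper's terse one-line proof leaves implicit.
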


\begin{proof}
Since $\varepsilon>0$ and $n-k\geq 1$ we have
\[
d \bigl( \rho(i_1,\ldots,i_m,k), \, \rho_\varepsilon (i_1,\ldots,i_m,k) \bigr)  = \frac{\varepsilon \sqrt{i_1^2+\cdots+i_m^2}}{(n-k)(n-k+\varepsilon)} < \frac{\varepsilon(i_1+\cdots+i_m)}{(n-k)}.
\qedhere
\]
\end{proof}

\begin{remark}
If $\operatorname{char}(K)=0$, by applying the Tchirnhausen transformation  we can additionally assume that $n-k\geq 2$, and we can restate Lemma~\ref{lem.1} as
$$
d \bigl( \rho(i_1,\ldots,i_m,k), \, \rho_\varepsilon (i_1,\ldots,i_m,k) \bigr) < \frac{\varepsilon(i_1+\cdots+i_m)}{2(n-k)}.
$$
\end{remark}

\begin{figure}[hbtp]
    \begin{tikzpicture}[]

        \begin{scope}[x = {({sin(-60)*1cm},{-cos(60)*1cm})},
                        y = {(1cm,0cm)},
                        z = {(0cm,1cm)}]
            \tikzset{
                every point/.style = {circle, inner sep={4.75\pgflinewidth}, 
                    opacity=1, draw, solid, fill
                },
                point/.style={insert path={node[every point, #1]{}}}, point/.default={},
                point name/.style = {insert path={coordinate (#1)}},
                epsi/.style={black!40}
            }
            
            \def\crad{4\pgflinewidth}
            
            \draw[->] (0,0,0)--(0,0,3);
            \draw[->] (0,0,0)--(0,7,0);
            \draw[->] (0,0,0)--(3,0,0);
            
            \draw[] (0, 0, 2.00000000000000)--(2.00000000000000, 1.00000000000000, 0) node[pos=0, left] {$(0,0,n)$} node[pos=1, below] {$\rho(P)$};
            \fill[] (0, 0, 2.00000000000000) circle (\crad);
            \fill[] (2.00000000000000, 1.00000000000000, 0)circle (\crad);
                
            \draw[epsi] (0, 0, 2.50000000000000)--(1.66666666666667, 0.833333333333333, 0) node[pos=0,left] {$(0,0,n+\varepsilon)$} node[pos=1, right] {$\rho_{\varepsilon}(P)$};
            \fill[epsi] (0, 0, 2.50000000000000) circle (\crad);
            \fill[epsi] (1.66666666666667, 0.833333333333333, 0)circle (\crad);
            
            \draw[dashed] (1,0,0) -- (1,.5,0) -- (1,.5,1) ;
            \draw[dashed] (1,.5,0) -- (0,.5,0) ;
            
            \fill (1,.5,1) circle (\crad) node[left] {$P$};
                
            \draw[] (0, 0, 2.00000000000000)--(2.00000000000000, 8.00000000000000, 0) node[pos=1, below] {$\rho(Q)$};
            \fill[] (0, 0, 2.00000000000000) circle (\crad);
            \fill[] (2.00000000000000, 8.00000000000000, 0)circle (\crad);
            \draw[epsi] (0, 0, 2.50000000000000)--(1.66666666666667, 6.66666666666667, 0) node[pos=1, below] {$\rho_{\varepsilon}(Q)$};
            \fill[epsi] (0, 0, 2.50000000000000) circle (\crad);
            \fill[epsi] (1.66666666666667, 6.66666666666667, 0)circle (\crad);
            
            \draw[dashed] (1,0,0) -- (1,4,0) -- (1,4,1);
            \draw[dashed]  (1,4,0) -- (0,4,0);
            \fill (1,4,1) circle (\crad) node[above right] {$Q$};

         \end{scope}
    \end{tikzpicture}
    \caption{The distance between $\rho(P)$ and $\rho_\varepsilon(P)$ can vary wildly.\label{fig2}}
\end{figure}

Notice that if $\varepsilon$ is not sufficiently small, this distance can be quite big (see Figure~\ref{fig2}), so that $\Delta(F)$ and $\Delta_\varepsilon(F)$ can be in fact essentially different polytopes, because we {\em lose} faces, as we see in the following example.

\begin{example}
Consider the surface defined by $F=Z^3+(X^2+XY^2)Z+X^2Y$. The classical polygon $\Delta(F)$ has two compact faces. The perturbed polygon $\Delta_\varepsilon(F)$ has two compact faces if $\varepsilon<2$ and only one compact face otherwise (see Figure~\ref{figEj} and Theorem~\ref{th1}).
\label{ejeps}
\end{example}

\begin{figure}[hbtp]
    \begin{tikzpicture}[scale=2]
    \tikzset{
        every point/.style = {circle, inner sep={1.75\pgflinewidth}, 
            opacity=1, draw, solid, fill
        },
        point/.style={insert path={node[every point, #1]{}}}, point/.default={},
        point name/.style = {insert path={coordinate (#1)}},
    }
    \fill[black!20] (1.000000,0.000000)  -- (2.000000,0.000000)  -- (2.000000,2.000000)  -- (0.500000,2.000000)  -- (0.500000,1.000000)  -- (0.666667,0.333333)  -- cycle;
    \draw[->] (0.000000,0.000000)  -- (2.000000,0.000000)  ;
    \draw[->] (0.000000,0.000000)  -- (0.000000,2.000000)  ;
    \draw[very thin] (0,0) -- (0,-3pt) node[below] {$0$};
    \draw[very thin] (1,0) -- (1,-3pt) node[below] {$1$};
    \draw[very thin] (0,1) -- (-3pt,1) node[left] {$1$}; 
        \draw (0.500000,1.000000)  [point];
        \draw (1.000000,0.000000)  [point];
        \draw (0.666667,0.333333)  [point];
    \end{tikzpicture}
    \qquad
    \begin{tikzpicture}[scale=2]
    \tikzset{
        every point/.style = {circle, inner sep={1.75\pgflinewidth}, 
            opacity=1, draw, solid, fill
        },
        point/.style={insert path={node[every point, #1]{}}}, point/.default={},
        point name/.style = {insert path={coordinate (#1)}},
    }
    \fill[black!20] (0.200000,0.400000)  -- (0.400000,0.000000)  -- (2.000000,0.000000)  -- (2.000000,2.000000)  -- (0.200000,2.000000)  -- cycle;
    \draw[->] (0.000000,0.000000)  -- (2.000000,0.000000)  ;
    \draw[->] (0.000000,0.000000)  -- (0.000000,2.000000)  ;
    \draw[very thin] (0,0) -- (0,-3pt) node[below] {$0$};
    \draw[very thin] (1,0) -- (1,-3pt) node[below] {$1$};
    \draw[very thin] (0,1) -- (-3pt,1) node[left] {$1$}; 
        \draw (0.200000,0.400000)  [point];
        \draw (0.400000,0.000000)  [point];
        \draw (0.333333,0.166667)  [point];
    \end{tikzpicture}
    \caption{When $\varepsilon$ is not sufficiently small, the perturbed polygon may lose faces. Left to right, the classic polygon of $F=Z^3+(X^2+XY^2)Z+X^2Y$ and the perturbed polygon of the same surface with $\varepsilon=3$.}
    \label{figEj}
\end{figure}

We would expect that if $\varepsilon$ is small enough, we should not lose information (i.e, we should not lose faces), by passing from $\Delta(F)$ to $\Delta_\varepsilon(F)$. Indeed, this is the case (see Remark \ref{Obs7}). Moreover, as is proven below, what actually happens is that we may gain new information in $\Delta_\varepsilon(F)$.

Next, we prove that $\rho_\varepsilon$ is in fact less coarse than $\rho$ in some sense.

\begin{lemma}
Let $(i_1,\ldots,i_m,k_1)$, $(j_1,\ldots,j_m,k_2)$ be different points in $N^\ast(F)$, and $\varepsilon \notin \Q$. Then
$$
d \bigl( \rho_\varepsilon(i_1,\ldots,i_m,k_1), \, \rho_\varepsilon (j_1,\ldots,j_m,k_2) \bigr) > 0.
$$
\end{lemma}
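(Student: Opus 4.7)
The plan is to argue by contradiction and exploit the irrationality of $\varepsilon$ to turn a coincidence of projections into a $\Q$-linear dependence that forces the two points to be equal.

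Suppose that $P_1=(i_1,\ldots,i_m,k_1)$ and $P_2=(j_1,\ldots,j_m,k_2)$ lie in $N^\ast(F)$ and satisfy $\rho_\varepsilon(P_1)=\rho_\varepsilon(P_2)$. Comparing the $s$-th coordinates gives, for each $s=1,\ldots,m$,
$$
\frac{i_s}{n-k_1+\varepsilon}=\frac{j_s}{n-k_2+\varepsilon},
$$
and cross-multiplying produces
$$
(i_s-j_s)\,\varepsilon \;=\; j_s(n-k_1)-i_s(n-k_2).
$$
The right-hand side is an integer, so if $\varepsilon\notin\Q$ we must have $i_s=j_s$ for every $s$, and consequently $i_s(n-k_1)=i_s(n-k_2)$ for every $s$.

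The next step is to split into cases according to whether $(i_1,\ldots,i_m)$ is the zero tuple. If some $i_s\neq 0$, the last identity forces $k_1=k_2$, which combined with $i_t=j_t$ for all $t$ contradicts $P_1\neq P_2$. If instead $(i_1,\ldots,i_m)=(0,\ldots,0)$, then also $(j_1,\ldots,j_m)=(0,\ldots,0)$. The point is that such tuples cannot appear in $N^\ast(F)$: by the Weierstrass form of $F$, any monomial with $a^{(k)}_{i_1,\ldots,i_m}\neq 0$ satisfies $k\leq n-1$ and $i_1+\cdots+i_m+k\geq n$, hence $i_1+\cdots+i_m\geq 1$; the only exception $(0,\ldots,0,n)$ has been explicitly removed from $N(F)$. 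This case is therefore vacuous.

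No serious obstacle is foreseen: the whole argument rests on the elementary observation that an irrational number cannot satisfy a nontrivial integer linear equation, together with the fact that the Weierstrass normalization precludes purely ``vertical'' points in $N^\ast(F)$. The only point that requires minor care is making sure that when $(i_1,\ldots,i_m)=(0,\ldots,0)$ one does not accidentally conclude $k_1=k_2$ from some other route; this is handled cleanly by noting that this case simply cannot occur for points of $N^\ast(F)$.
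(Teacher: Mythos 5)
Your proof is correct and rests on the same key mechanism as the paper's: the coincidence of projections yields an integer linear relation $(i_s-j_s)\varepsilon = j_s(n-k_1)-i_s(n-k_2)$, and irrationality of $\varepsilon$ forces both sides to vanish. The paper organizes the argument slightly differently (first treating the case where the $X$-exponent tuples agree, then the case where some coordinate differs), whereas you derive $i_s=j_s$ for all $s$ directly and then appeal to the Weierstrass normalization to rule out the all-zero tuple. Your version is in fact a bit more careful than the paper's: the paper's first case implicitly assumes some $i_l\neq 0$ when it invokes the equivalence $\frac{i_l}{n-k_1+\varepsilon}=\frac{i_l}{n-k_2+\varepsilon}\Leftrightarrow k_1=k_2$, and you make that hypothesis explicit by checking that $(0,\ldots,0,k)$ cannot occur in $N^\ast(F)$. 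No gap; this is the same proof, cleanly presented.
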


\begin{proof}
If the points are $(i_1,\ldots,i_m,k_1)$ and $(i_1,\ldots,i_m,k_2)$, the claim is obvious since
$$
\frac{i_l}{n-k_1+\varepsilon}=\frac{i_l}{n-k_2+\varepsilon} \; \Longleftrightarrow \; k_1=k_2.
$$
Otherwise, take $l$ such that $i_l\neq j_l$. If the distance was zero, then
$$
\frac{i_l}{n-k_1+\varepsilon} =  \frac{j_l}{n-k_2+\varepsilon} \Longrightarrow
\varepsilon = \frac{i_l(n-k_2)-j_l(n-k_1)}{j_l-i_l}\in\mathbb Q,
$$
which is a contradiction.
\end{proof}

\begin{remark}
From now on, taking into account the above result, we will  assume that $\varepsilon \notin \Q$. Moreover, we will take $0 < \varepsilon < 1$ in order to keep the projection $\rho_\varepsilon$ from $(0,\ldots,0,n+\varepsilon)$ close to the point $(0,\ldots,0,n)$ corresponding to the monomial $Z^n$ which gives the multiplicity. Furthermore, we will give in Lemma \ref{Lema8} another reason for this restriction on $\varepsilon$.
\label{epsCond}
\end{remark}

So, in this set-up, the projection-scaling $\rho_\varepsilon$  distinguishes all points from $N^\ast(F)$  at the cost of moving away from the natural projection~$\rho$. We now come to the main definition of this paper.

\begin{definition}
The perturbed Newton-Hironaka polytope is defined as
$$
\Delta_\varepsilon (F) = \operatorname{CH} \Biggl( \bigcup_{(i_1,\ldots,i_m,k) \in N^\ast(F)} \biggl[  \biggl( \frac{i_1}{n-k+\varepsilon},\ldots, \frac{i_m}{n-k+\varepsilon} \biggr) + \R_{\geq0}^m \biggr]  \Biggr) \subset \R_{\geq0}^m.
$$
\end{definition}

\begin{remark}
For the rest of the paper, $\Delta_0(F)$ will denote the classical Newton-Hironaka polytope (which is completely coherent with the previous definition), but whenever we use $\Delta_\varepsilon(F)$ we will assume that~$\varepsilon$  satifies the conditions of Remark~\ref{epsCond} (in particular, $\varepsilon \neq 0$).
\end{remark}

The fact that the projection $\rho_\varepsilon$ does not mix up points of $N^\ast(F)$ might be used to get a finer control of the effect of blowing ups on the polytope, but we have not pursued this research so far.

\section{Faces of $\Delta_\varepsilon (F)$}

We will turn our focus now to the faces of the perturbed polygon. Unlike the case of the original polytope $\Delta (F)$, we will see that one can distinguish between faces defined by monomials belonging to a single coefficient $a_k(X_1,\dots,X_m)$ and faces where more than one $a_k(X_1,\dots,X_m)$ need to be accounted for.

\begin{remark}
Note first that, since the vertices of $\Delta_{\varepsilon}(F)$ are in $\R_{\geq0}^m$, we may assume that any equation
$$
A_1x_1+\cdots+A_mx_m=B
$$
defining a facet, or a hyperplane containing a generic face of~$\Delta_{\varepsilon}(F)$,  has $A_i \geq 0$ and $B>0$, with no loss of generality. We will make this assumption for the rest of the paper.
Furthermore,  if we are considering a compact face, then it must hold that all~$A_i>0$.
\end{remark}

Now, by the definition of $\Delta_{\varepsilon}(F)$, its vertices cannot have rational coordinates, but we can still define a concept of rational face which will be useful in the sequel.

\begin{definition}
A face $\tau$ in $\Delta_\varepsilon(F)$, of dimension $r$ is said to be rational if it is contained in an $r$-dimensional affine space defined by
$$
A_1^{(l)}x_1+\cdots+A_m^{(l)}x_m=B^{(l)},\quad \text{for $1\leq l\leq m-r$},
$$
with $A_i^{(l)} \in \Z_{\geq0}$ and $B^{(l)} \in \mathbb R_{>0}$.
\end{definition}

Now we study the question of whether we have rational compact faces in $\partial \Delta_\varepsilon(F)$, once we choose $\varepsilon\notin\Q$.

\begin{definition}
Given $F$ and $\Delta_\varepsilon(F)$, for any face $\tau$ in $\partial\Delta_\varepsilon(F)$, we denote by $F_\tau$ the polynomial
$$
F_\tau=\sum_{\rho_\varepsilon(i_1,\ldots,i_m,k)\in\tau}a_{i_1,\ldots,i_m}^{(k)}X_1^{i_1}\cdots X_m^{i_m}Z^k
$$
\end{definition}

\begin{proposition}
A face $\tau$ (non-parallel to the coordinate hyperplanes) in $\partial \Delta_\varepsilon(F)$ is rational if and only if there exists $k$ such that
$$
F_\tau=Z^k\sum_{\rho_\varepsilon(i_1,\ldots,i_m,k)\in\tau}a_{i_1,\ldots,i_m}^{(k)}X_1^{i_1}\cdots X_m^{i_m},
$$
i.e., all the monomials of $F_\tau$ have the same exponent in $Z$.
\label{lemmaSlope}
\end{proposition}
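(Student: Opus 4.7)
My plan is to handle the two implications separately, with the irrationality of $\varepsilon$ driving the forward direction and a normal-cone plus density argument driving the converse.

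For the forward direction, I suppose $\tau$ is rational and fix any one of its defining equations $\sum_l A_l x_l = B$ with $A_l \in \Z_{\geq 0}$, $B \in \R_{>0}$. If $\tau$ contained two projected points with different $Z$-exponents, say $\rho_\varepsilon(i_1,\ldots,i_m,k_1)$ and $\rho_\varepsilon(j_1,\ldots,j_m,k_2)$ with $k_1 \neq k_2$, substituting yields $\sum A_l i_l = B(n-k_1+\varepsilon)$ and $\sum A_l j_l = B(n-k_2+\varepsilon)$. Subtracting produces $B(k_2-k_1) = \sum A_l (i_l - j_l) \in \Z$, so $B \in \Q$; then $B\varepsilon = \sum A_l i_l - B(n-k_1) \in \Q$ forces $\varepsilon \in \Q$ since $B \neq 0$, contradicting the standing assumption $\varepsilon \notin \Q$ from Remark~\ref{epsCond}.

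For the converse, I assume all monomials of $F_\tau$ share the same $Z$-exponent $k$. Then every projected point in $\tau$ has the form $\tfrac{1}{n-k+\varepsilon}(i_1,\ldots,i_m)$ for an integer tuple $(i_1,\ldots,i_m)$, so the affine hull of $\tau$ equals the rescaling $\tfrac{1}{n-k+\varepsilon} H_0$ of the rational affine subspace $H_0 \subset \R^m$ spanned by these lattice points; in particular, the linear direction of $\tau$ coincides with the rational subspace $V$ underlying $H_0$, independently of the irrational scaling. I then move to the $(m-r)$-dimensional rational dual subspace $V^* \subset (\R^m)^*$ annihilating $V$. Under the non-parallelism hypothesis, $\tau$ is a compact face meeting $\R^m_{>0}$. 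Since $\Delta_\varepsilon(F)$ is closed under adding $\R^m_{\geq 0}$, the normal cone $N(\tau)$ is a full-dimensional subcone of $V^* \cap \R^m_{\geq 0}$. Using density of $\Q^m$ in the rational subspace $V^*$, I pick $m-r$ linearly independent rational functionals $A^{(1)},\ldots,A^{(m-r)}$ in the interior of $N(\tau)$, clear denominators to obtain $A^{(s)} \in \Z^m_{\geq 0}$, and set $B^{(s)} := A^{(s)} \cdot x$ for any $x \in \tau$; this is constant on $\tau$, and strictly positive because $\tau$ meets $\R^m_{>0}$ while $A^{(s)}$ is nonzero in $\R^m_{\geq 0}$. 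The $r$-dimensional intersection of the $m-r$ hyperplanes $\sum_l A^{(s)}_l x_l = B^{(s)}$ contains $\tau$, hence equals its affine hull, proving $\tau$ is rational.

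The main obstacle is the converse: one must extract defining equations with \emph{non-negative integer} coefficients and \emph{strictly positive} real right-hand sides, rather than just any rational ones; the role of the non-parallelism hypothesis is precisely to secure both the compactness and full-dimensionality of $N(\tau)$ inside $V^*$ needed for the density argument, and to force positivity of the $B^{(s)}$.
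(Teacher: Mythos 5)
Your forward direction is essentially the paper's own argument, differing only in bookkeeping: you deduce $B\in\Q$ and then $\varepsilon\in\Q$, while the paper deduces equality of the integer left-hand sides $p=q$ and then $k^{(1)}=k^{(2)}$; both hinge on the same use of $\varepsilon\notin\Q$. The converse is a genuinely different route. The paper writes out the equations of the affine hull of $\tau$ explicitly as minors of a matrix built from the exponents $i_j^{(l)}$ and from $n-k+\varepsilon$, observing that all but one of the resulting equations are homogeneous with integer coefficients and the remaining one carries a single irrational constant term. You instead note that the linear direction of $\tau$ is a rational subspace (a rescaling by $1/(n-k+\varepsilon)$ of a lattice span), pass to the rational annihilator $V^*$, and use that the normal cone of $\tau$ is a full-dimensional subcone of $V^*\cap\R_{\geq 0}^m$ together with density of rational points in $V^*$ to select defining functionals with nonnegative integer entries after clearing denominators. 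Your route has the merit of producing equations that actually satisfy the stated definition of rational face, with $A_i^{(l)}\in\Z_{\geq 0}$ and $B^{(l)}>0$ — a normalization the paper's determinantal computation glosses over, since its homogeneous equations have $B^{(l)}=0$ and possibly negative coefficients, and the reconciliation with the definition (via the standing normalization remark) is left implicit. The cost is invoking more machinery (normal cones, density) where the paper stays elementary. One caveat in your write-up: your reason that $B^{(s)}>0$, namely that $\tau$ meets $\R_{>0}^m$, is a fragile justification and would itself require an argument. A cleaner route is to observe that compactness of $\tau$ forces any $\phi$ in the relative interior of $N(\tau)$ to be strictly positive in every coordinate (otherwise the face minimizing $\phi$ would be unbounded along some coordinate direction), after which $A^{(s)}\cdot x>0$ for any $x\in\tau$ because every point of $\rho_\varepsilon\bigl(N^\ast(F)\bigr)$ has a nonzero coordinate, as $i_1+\cdots+i_m\geq n-k\geq 1$. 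Likewise, your claim that the non-parallelism hypothesis by itself secures compactness of $\tau$ is not quite right; compactness should be taken as part of the standing assumption on the faces under consideration.
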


\begin{proof}
First we will prove that given two different points in $\rho_\varepsilon(N^\ast(F))$, if they belong to a hyperplane of the form
$$
A_1x_1+\cdots+A_mx_m=B
$$
with $A_i\in\Z_{\geq 0}$ and $B\in\mathbb{R}_{>0}$, then the points must come from monomials in $F$ with the same exponent in $Z$.

Indeed, let 
$$
P_1=\left(\frac{i_1^{(1)}}{n-k^{(1)}+\varepsilon},\ldots,\frac{i_m^{(1)}}{n-k^{(1)}+\varepsilon}\right),\  P_2=\left(\frac{i_1^{(2)}}{n-k^{(2)}+\varepsilon},\ldots,\frac{i_m^{(2)}}{n-k^{(2)}+\varepsilon}\right)
$$
be two such points. Then, for $l=1,2$ we have
$$
A_1i_1^{(l)}+\cdots+A_mi_m^{(l)}=B(n-k^{(l)}+\varepsilon).
$$

Hence, denoting by $p$ and $q$ the integers $p=A_1i_1^{(1)}+\cdots+A_mi_m^{(1)}$ and $q=A_1i_1^{(2)}+\cdots+A_mi_m^{(2)}$, we deduce
$$
B=\frac{p}{n-k^{(1)}+\varepsilon}=\frac{q}{n-k^{(2)}+\varepsilon},
$$
where $p,q\neq 0$ because $B\neq 0$. Now from the last equality it follows that
$p=q$, since otherwise
$$
\varepsilon=\frac{\left( n-k^{(2)} \right)p- \left( n-k^{(1)} \right)q}{q-p}\in\Q.
$$
Therefore $k^{(1)}=k^{(2)}$.

Now, we are given $\tau$ a rational face of dimension $r$, and let $\{ P_0,...,P_r\}$ a set of points in $\rho_\varepsilon(N^\ast(F))$ which spans $L$, the $r$--dimensional linear variety containing $\tau$. Then, by definition, $L$ must be defined by a set of equations 
$$
A_1^{(l)}x_1+\cdots+A_m^{(l)}x_m=B^{(l)},\quad \text{for $1\leq l\leq m-r$},
$$
which {\em might not} be the equations of the facets intersecting in $\tau$. Nevertheless, using the result we have just proved above, the result follows.

The converse goes with similar techniques. Assume we have a face $\tau$  spanned by points representing monomials from the same coefficient $a_k(X_1,\dots,X_m)$ and let $\{P_0,\dots,P_d\}$ be a basis of the face,
$$
P_l = \biggl( \frac{i_1^{(l)}}{n-k+\varepsilon},\dots, \frac{i_m^{(l)}}{n-k+\varepsilon} \biggr), \qquad\text{ for  $l=0,\dots,d$.}
$$
Then, a system of equations for the linear variety containing $\tau$ is given by
$$
\operatorname{rank} \left( \begin{array}{cccc}
1 & x_1 & \dots & x_n \\
n-k+\varepsilon & i_1^{(0)} & \dots & i_m^{(0)} \\
\vdots & \vdots & & \vdots \\
n-k+\varepsilon & i_1^{(d)} & \dots & i_m^{(d)} \\
\end{array} \right) = d+1.
$$

As $\tau$ is spanned by $\{P_0,\dots,P_d\}$ we may assume, without loss of generality, that
$$
\left| \begin{array}{ccc}
i_1^{(0)} & \dots & i_d^{(0)} \\
\vdots & \ddots & \vdots \\
i_1^{(d)} & \dots & i_d^{(d)} \\
\end{array} \right| \neq 0.
$$

Hence, the equations of the minimal linear variety containing $\tau$ is given by the equations:
$$
\begin{cases}
\left| \begin{array}{cccc}
x_1 & \dots  & x_d & x_l \\
i_1^{(0)} & \dots  & i_d^{(0)} & i_l^{(0)} \\
\vdots & \ddots & \vdots & \vdots \\
i_1^{(d)} & \dots  & i_d^{(d)} & i_l^{(d)} \\
\end{array} \right| = 0, & \text{ for  $l=d+1,\dots ,m$,} 
\\
\left| \begin{array}{cccc}
1 & x_1 & \dots  & x_d \\
n-k+\varepsilon & i_1^{(0)} & \dots  & i_d^{(0)} \\
\vdots & \vdots & & \vdots \\
n-k+\varepsilon & i_1^{(d)} & \dots  & i_d^{(d)} \\
\end{array} \right| = 0.
\end{cases}
$$
The first $m-d$ equations are clearly homogeneous and with integer coefficients. The last equation, however, has the form
$$
\alpha_0 + \sum_{i=1}^d \alpha_i (n-k+\varepsilon) x_i = 0, \qquad \text{ with  $\alpha_0,\dots ,\alpha_d \in \Z$,}
$$
so it can be written as
$$
\alpha_1 x_1 + \dots  + \alpha_d x_d = \beta, \qquad \text{ with  $\beta \in \R$.}
$$
This proves the converse statement.
\end{proof}

Notice that, as mentioned above, one does not have this information in the classical polytope. In $\Delta_0 (F)$, all faces are rational, and hence we cannot know whether two different points in $\partial\Delta_0(F)$ correspond or not to monomials with the same exponent in $Z$.

When a given face $\tau$ is not rational we have the following additional property. 

\begin{proposition}
\label{LemIrrational}
Given a compact face $\tau$ of $\Delta_\varepsilon(F)$ which is not rational, we have
$$
\# \big( \tau\cap\rho_\varepsilon\bigl(N^\ast(F)\bigr) \big)= dim(\tau)+1.
$$
\end{proposition}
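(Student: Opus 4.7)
The plan is to argue by contradiction. Since $\tau$ is a compact face of dimension $d$, it has at least $d+1$ vertices, each of which lies in $\rho_\varepsilon(N^\ast(F))$, so the inequality $\#\bigl(\tau\cap\rho_\varepsilon(N^\ast(F))\bigr)\geq d+1$ is automatic. I will suppose that the intersection contains $d+2$ or more points and deduce that $\tau$ admits rational defining data, contradicting the hypothesis.

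Pick $d+2$ distinct points $P_0,\ldots,P_{d+1}\in\tau\cap\rho_\varepsilon(N^\ast(F))$ with $P_l=\rho_\varepsilon(i^{(l)},k^{(l)})$. They lie in the $d$-dimensional affine hull of $\tau$, so they satisfy a nontrivial affine relation, equivalent (after multiplying through by the positive denominators $n-k^{(l)}+\varepsilon$) to the assertion that the matrix
$$
M'=\begin{pmatrix} n-k^{(0)}+\varepsilon & i^{(0)}_1 & \cdots & i^{(0)}_m\\ \vdots & \vdots & & \vdots\\ n-k^{(d+1)}+\varepsilon & i^{(d+1)}_1 & \cdots & i^{(d+1)}_m\end{pmatrix}
$$
has rank exactly $d+1$. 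Since the entries of $M'$ lie in $\Z[\varepsilon]$, I can pick a generator of its left null space of the form $\nu=c+e\,\varepsilon$ with $c,e\in\Z^{d+2}$; reading $\nu^{\mathsf T}M'=0$ as a polynomial identity in $\varepsilon$ and matching the coefficients of $1$, $\varepsilon$, and $\varepsilon^2$ produces integer linear relations among the $i^{(l)}$'s and the $n-k^{(l)}$'s, notably $\sum_l c_l\,i^{(l)}=0$ and $\sum_l e_l\,i^{(l)}=0$.

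The compactness of $\tau$ next supplies a supporting equation $A\cdot x=B$ with all $A_j>0$ and $B>0$, so that $A\cdot i^{(l)}=B(n-k^{(l)}+\varepsilon)$ for every $l$. Applying $A$ to $\sum_l c_l\,i^{(l)}=0$ and $\sum_l e_l\,i^{(l)}=0$ and using $\varepsilon\notin\Q$ forces the further integer identities $\sum_l c_l=\sum_l c_l\,k^{(l)}=\sum_l e_l=\sum_l e_l\,k^{(l)}=0$; I then intend to combine these with the relations on the $i^{(l)}$'s to manufacture a nonnegative integer linear form $A'$ and a positive real $B'$ that cut out the affine hull of $\tau$, which by the definition given in Section~4 (equivalently, by Proposition~\ref{lemmaSlope}) makes $\tau$ rational and contradicts the hypothesis. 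The main obstacle I foresee is precisely this final construction: translating the integer null-space data $(c,e)$ into defining data with the \emph{nonnegativity} required by the definition of a rational face, where the strict positivity of $(A,B)$ coming from compactness is the essential ingredient.
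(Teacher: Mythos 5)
Your strategy differs from the paper's. For a facet the paper writes down the $(m+1)\times(m+1)$ determinant of the cleared-denominator rows $(n-k^{(l)}+\varepsilon, i^{(l)})$ of $m+1$ points of $\tau\cap\rho_\varepsilon(N^\ast(F))$, splits it into an integer part plus $\varepsilon$ times an integer part, concludes both vanish because $\varepsilon\notin\Q$, and then asserts that these two vanishings together with the non-rationality of $\tau$ (hence, via Proposition~\ref{lemmaSlope}, the $k^{(l)}$ not all equal) ``give a contradiction.'' You instead extract a null vector $\nu=c+e\varepsilon$ and aim to turn $(c,e)$ into nonnegative rational defining data for the affine hull of $\tau$. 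A minor technical point first: matching coefficients of $1$, $\varepsilon$, $\varepsilon^2$ is not valid under the paper's sole assumption $\varepsilon\notin\Q$ (think $\varepsilon=\sqrt2-1$); however the supporting equation $A\cdot x=B$, $A_i>0$, $B>0$, applied to $\sum c_l i^{(l)}=0$ and $\sum e_l i^{(l)}=0$, already yields $\sum c_l=\sum c_lk^{(l)}=\sum e_l=\sum e_lk^{(l)}=0$ using only $\varepsilon\notin\Q$, so that step is reparable. Your real worry, the final construction, is the crux --- and it is not a bookkeeping issue about signs; it cannot be overcome.

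Take $m=2$, $n=4$, $F=Z^4+X^4+X^2Y^2Z+Y^4Z^2$, so $N^\ast(F)=\{(4,0,0),(2,2,1),(0,4,2)\}$. These satisfy $(2,2,1)=\tfrac12\bigl((4,0,0)+(0,4,2)\bigr)$: they are collinear in $\Z^3$, so their $\rho_\varepsilon$-images are collinear for \emph{every} $\varepsilon$, all lying on $(4+\varepsilon)x+(2+\varepsilon)y=4$. That segment is the unique compact face $\tau$ of $\Delta_\varepsilon(F)$, of dimension $1$, and it is not rational: any $A_1x+A_2y=B$ with $A_i\in\Z_{\geq0}$, $B>0$ through both endpoints forces $A_1(2+\varepsilon)=A_2(4+\varepsilon)$, hence $A_1=A_2=0$. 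Yet $\#\bigl(\tau\cap\rho_\varepsilon(N^\ast(F))\bigr)=3\neq\dim\tau+1$. For this $F$ one computes $\nu=(1,1,-2)$, i.e.\ $c=(1,1,-2)$ and $e=0$: every integer identity on your list holds, but the affine hull is genuinely irrational, so no nonnegative rational form can cut it out. The same example also breaks the paper's last step: both integer determinants vanish and $k^{(0)},k^{(1)},k^{(2)}=0,1,2$ are not all equal, yet there is no contradiction, because points of $N^\ast(F)$ can be affinely dependent in $\Z^{m+1}$ itself (and then the two co-hyperplanarity conditions hold automatically). As written the proposition appears to be false; neither your route nor the paper's can close without an additional hypothesis excluding such integral affine dependencies among the exponent vectors.
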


\begin{proof}
First we prove it for facets. Let $\tau$ be a non-rational compact facet of $\Delta_\varepsilon(F)$. 

By definition of the polytope, the vertices are points in $\rho_\varepsilon(N^\ast(F))$ and therefore $\# \big( \tau\cap\rho_\varepsilon\bigl(N^\ast(F)\bigr) \big)\geq m$. Let us suppose that $\tau$ contains $m+1$ points of $\rho_\varepsilon\bigl(N^\ast(F)\bigr)$
$$
P_l=\left(\frac{i_1^{(l)}}{n-k^{(l)}+\varepsilon},\ldots,\frac{i_m^{(l)}}{n-k^{(l)}+\varepsilon}\right),
$$
for $l=0,\ldots,m$. Then we deduce that
$$
\left|\begin{array}{cccc}
1 & \displaystyle \frac{i_1^{(0)}}{n-k^{(0)}+\varepsilon}  & \cdots & \displaystyle \frac{i_m^{(0)}}{n-k^{(0)}+\varepsilon}\\
\vdots & \vdots &  & \vdots \\
1 & \displaystyle \frac{i_1^{(m)}}{n-k^{(m)}+\varepsilon}  & \cdots & \displaystyle \frac{i_m^{(m)}}{n-k^{(m)}+\varepsilon}\\
\end{array}\right|=0.
$$
Hence,
$$
\left|\begin{array}{cccc}
n-k^{(0)}+\varepsilon & i_1^{(0)}  & \cdots & i_m^{(0)}\\
\vdots & \vdots &  & \vdots \\
n-k^{(m)}+\varepsilon & i_1^{(m)}  & \cdots & i_m^{(m)}\\
\end{array}\right|=0.
$$
But the expansion of  the determinant is 
$$\left|\begin{array}{cccc}
n-k^{(0)} & i_1^{(0)}  & \cdots & i_m^{(0)}\\
\vdots & \vdots &  & \vdots \\
n-k^{(m)} & i_1^{(m)}  & \cdots & i_m^{(m)}\\
\end{array}\right|+\varepsilon\left|\begin{array}{cccc}
1  & i_1^{(0)} & \cdots & i_m^{(0)}\\
\vdots & \vdots &  & \vdots \\
1  & i_1^{(m)} & \cdots & i_m^{(m)}\\
\end{array}\right|,
$$
and, since $\varepsilon\notin\mathbb Q$, we deduce the following equalities
$$
\left|\begin{array}{cccc}
n-k^{(0)} & i_1^{(0)}  & \cdots & i_m^{(0)}\\
\vdots & \vdots & & \vdots \\
n-k^{(m)} & i_1^{(m)}  & \cdots & i_m^{(m)}\\
\end{array}\right|=0    
        \quad\text{ and }\quad
\left|\begin{array}{cccc}
1  & i_1^{(0)} & \cdots & i_m^{(0)}\\
\vdots & \vdots & & \vdots \\
1  & i_1^{(m)} & \cdots & i_m^{(m)}\\
\end{array}\right|=0.
$$
Consider now the orthogonal projection
\begin{eqnarray*}
\pi: N(F) & \longrightarrow & \R_{\geq0}^m \times \{ 0 \}\\
\left( l_1,\dots ,l_m,t \right) & \longmapsto & \left( l_1,\dots ,l_m,0 \right)
\end{eqnarray*}

Then the first equality means that the points $\bigl\{ \rho \left( P_0 \right), \dots , \rho \left( P_m \right) \bigr\}$
are co-hyper\-planar in $\R^m$, while the second means that the points $\bigl\{ \pi \left( P_0 \right), \dots , \pi \left( P_m \right) \bigr\}$
are co-hyperplanar in $\R^m$. By Proposition \ref{lemmaSlope}, the integers $k^{(0)},\ldots, k^{(m)}$ are not all equal, and hence the previous two identities give a contradiction.

Hence, the $m$ points of $\tau\cap\rho_\varepsilon\bigl(N^\ast(F)\bigr)$ are on the boundary of $\tau$, and are generators of the facet. Then the result follows for faces of any dimension, since any face can be seen as intersection of facets.
\end{proof}

\begin{remark}
Proposition~\ref{LemIrrational} is no longer true if the face is rational, as the surface examples at the end of the paper show.
\end{remark}

\begin{corollary}
If $\tau$ is a non-rational compact face of $\Delta_\varepsilon(F)$ we have
$$
\operatorname{int}(\tau) \cap\rho_\varepsilon\bigl(N^\ast(F)\bigr)=\emptyset.
$$

\end{corollary}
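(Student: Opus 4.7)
The plan is to extract the corollary directly from Proposition~\ref{LemIrrational} by a dimension-vs-vertex count. Set $r=\dim(\tau)$; Proposition~\ref{LemIrrational} gives $\#\bigl(\tau\cap\rho_\varepsilon(N^\ast(F))\bigr)=r+1$. If I can show that every point of this intersection is in fact a vertex of $\tau$, then the intersection lies on $\partial\tau$ and the desired conclusion $\operatorname{int}(\tau)\cap\rho_\varepsilon(N^\ast(F))=\emptyset$ follows at once.

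The key auxiliary fact is the standard Newton-polyhedron observation that any vertex of $\Delta_\varepsilon(F)=\operatorname{CH}\bigl(\bigcup_P(P+\R_{\geq 0}^m)\bigr)$ must coincide with one of the generating points $P\in\rho_\varepsilon(N^\ast(F))$. Indeed, a vertex is an extreme point, and no point of the form $P+v$ with $v\in\R_{\geq 0}^m\setminus\{0\}$ can be extreme, since it is then a proper convex combination along the segment from $P$ to $P+2v$. Because every vertex of a compact face $\tau$ is also a vertex of the ambient polyhedron $\Delta_\varepsilon(F)$, the vertex set $V(\tau)$ of $\tau$ is contained in $\tau\cap\rho_\varepsilon(N^\ast(F))$.

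To finish, I would note that $\tau$, being a compact convex polytope of dimension $r$, is the convex hull of its vertices and hence must carry at least $r+1$ of them (otherwise $\tau$ would lie in an affine space of dimension $\leq r-1$). Combining this lower bound with the upper bound $r+1$ coming from Proposition~\ref{LemIrrational}, we obtain $\#V(\tau)=r+1$ and $V(\tau)=\tau\cap\rho_\varepsilon(N^\ast(F))$. Since vertices lie on $\partial\tau$, the corollary follows. The only non-bookkeeping step is the extremality remark about the Newton-type polyhedron $\Delta_\varepsilon(F)$, which is routine; the substantive content is already packed into Proposition~\ref{LemIrrational}.
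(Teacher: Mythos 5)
Your proposal is correct and follows essentially the same route as the paper: the corollary is deduced directly from Proposition~\ref{LemIrrational} by observing that the $\dim(\tau)+1$ points of the intersection must be the vertices of $\tau$, hence lie on $\partial\tau$. You have simply made explicit the vertex-counting step (vertices of a face are vertices of the Newton polyhedron, which are generating points, combined with the lower bound of $r+1$ vertices for an $r$-dimensional polytope) that the paper leaves implicit in its one-line argument.
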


\begin{proof}
It is a direct consequence of Proposition \ref{LemIrrational}, since the dim$(\tau)+1$ points of $\rho_\varepsilon\bigl(N^\ast(F)\bigr)$ are necessarily on the border of $\tau$.
\end{proof}

\section{Comparing perturbed polytopes}

Let $\varepsilon$,  $\varepsilon'$ be two different parameters, chosen as per Remark~\ref{epsCond}. We compare the polyhedra $\Delta_\varepsilon(F)$ and $\Delta_{\varepsilon'}(F)$.

Obviously, there is a bijective correspondence for any given pair $0<\varepsilon,\varepsilon'<1$,
\begin{eqnarray*}
T_{\varepsilon,\varepsilon'}\ \colon\ \rho_\varepsilon\bigl(N^\ast(F)\bigr) & \longrightarrow & \rho_{\varepsilon'}\bigl(N^\ast(F)\bigr)\\
P=\left(\frac{i_1}{n-k+\varepsilon},\ldots,\frac{i_m}{n-k+\varepsilon}\right)\ & \longmapsto\ & P'=\left(\frac{i_1}{n-k+\varepsilon'},\ldots,\frac{i_m}{n-k+\varepsilon'}\right)\\
\end{eqnarray*}
But we can say more when $\varepsilon$ and $\varepsilon'$ are sufficiently close, as this correspondence sends compact faces of $\Delta_\varepsilon(F)$ to compact faces of the same dimension of $\Delta_{\varepsilon'}(F)$.

Indeed, given a compact face $\tau$ of $\Delta_\varepsilon(F)$ of dimension $r$, let $P_1,\ldots,P_{r+1}$ be the points of $\rho_\varepsilon\bigl(N^\ast(F)\bigr)$ that generate $\tau$. 
Set $\tau'$ as follows
$$
\tau'=\{\lambda_1P_1'+\cdots+\lambda_{r+1}P_{r+1}'\quad |\quad \lambda_i\geq 0,\quad \lambda_1+\cdots+\lambda_{r+1}=1\}.
$$
Next we prove that, for $\varepsilon-\varepsilon'$ small, $\tau'$ is well defined and hence it is the face generated by the points $P'_1,\ldots,P'_{r+1}$.

\begin{lemma}
For $\varepsilon-\varepsilon'$ small enough and in the conditions of Remark~\ref{epsCond},  $\tau'=T_{\varepsilon,\varepsilon'}(\tau)$, and it is a compact face of the same dimension as $\tau$.
\end{lemma}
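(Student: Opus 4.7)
I plan to verify two things: $(i)$ the points $P_1',\ldots,P_{r+1}'$ are affinely independent, so that $\tau'$ is genuinely $r$-dimensional, and $(ii)$ there exists a supporting hyperplane of $\Delta_{\varepsilon'}(F)$ whose intersection with the polytope is exactly $\tau'$, so $\tau'$ is in fact a face. Compactness of $\tau'$ will be automatic, since it is the convex hull of finitely many points in $\R_{\geq 0}^m$.

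For $(i)$, the map $\varepsilon \mapsto \rho_\varepsilon(Q)$ is a rational function of $\varepsilon$ for each fixed $Q \in N^\ast(F)$, so $P_j' \to P_j$ as $\varepsilon' \to \varepsilon$. Affine independence of $P_1,\ldots,P_{r+1}$ is equivalent to the non-vanishing of some $(r+1)\times(r+1)$ minor of the matrix whose $j$-th row is $(1,P_j)$. This minor is a rational (hence continuous) function of $\varepsilon'$, so it remains non-zero on a neighborhood of $\varepsilon$, and $(i)$ follows.

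For $(ii)$, I would start with a supporting hyperplane $H\colon A_1 x_1+\cdots+A_m x_m = B$ for $\tau$ in $\Delta_\varepsilon(F)$, with $A_i\geq 0$, $B>0$. Since $\Delta_\varepsilon(F)$ has only finitely many vertices, there is a uniform gap $\delta>0$ such that $\sum_i A_i V_i \geq B+\delta$ for every vertex $V$ of $\Delta_\varepsilon(F)$ not in $\tau$; by convexity and the non-negativity of the $A_i$, the same strict inequality extends to every other point of $\rho_\varepsilon(N^\ast(F))$ outside $\tau$. Using $(i)$, I can solve (by Cramer's rule when $r=m-1$, and by expressing a general $\tau$ as an intersection of facets otherwise) for perturbed coefficients $A_1'(\varepsilon'),\ldots,A_m'(\varepsilon'), B'(\varepsilon')$ that depend continuously on $\varepsilon'$, agree with $(A,B)$ at $\varepsilon'=\varepsilon$, and define a hyperplane $H'(\varepsilon')$ passing through $P_1',\ldots,P_{r+1}'$.

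The main obstacle is to verify that $H'(\varepsilon')$ really supports $\Delta_{\varepsilon'}(F)$ and meets it only along $\tau'$. This is where continuity pays off: the non-$\tau$ vertices of $\Delta_{\varepsilon'}(F)$ are in bijection, via $T_{\varepsilon,\varepsilon'}$, with those of $\Delta_\varepsilon(F)$ for $\varepsilon'$ close enough, because being a vertex amounts to a finite system of strict convex-combination inequalities, and all such strict inequalities survive small perturbations. Since both the perturbed vertices and the perturbed coefficients move only slightly, the gap $\delta$ shrinks at worst to $\delta/2>0$. This keeps $P_1',\ldots,P_{r+1}'$ on $H'(\varepsilon')$ and every other vertex of $\Delta_{\varepsilon'}(F)$ strictly above it, whence $\tau' = H'(\varepsilon')\cap \Delta_{\varepsilon'}(F)$ is a compact $r$-dimensional face, as claimed.
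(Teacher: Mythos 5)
Your step $(i)$ is a sound and somewhat more explicit version of what the paper does, but via a different mechanism. The paper splits into cases using Propositions~\ref{lemmaSlope} and~\ref{LemIrrational}: if $\tau$ is rational then all of $\tau\cap\rho_\varepsilon(N^\ast(F))$ has the same $k$, so $T_{\varepsilon,\varepsilon'}$ acts on $\tau$ as a single homothety of ratio $(n-k+\varepsilon)/(n-k+\varepsilon')$ (preserving dimension for \emph{any} $\varepsilon'$, not just nearby ones), while if $\tau$ is non-rational then $\tau\cap\rho_\varepsilon(N^\ast(F))$ consists of exactly $\dim(\tau)+1$ points, so $\tau'$ is honestly the convex hull of their images. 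Your determinant-continuity argument treats both cases uniformly and is perfectly valid, but it loses the sharper ``any $\varepsilon'$'' conclusion in the rational case, and it does not explicitly address the ``$\tau'=T_{\varepsilon,\varepsilon'}(\tau)$'' clause of the statement — in the rational case with more than $r+1$ points of $\rho_\varepsilon(N^\ast(F))$ on $\tau$, one must still observe (as the paper does via the homothety) that \emph{all} of them land inside $\tau'$, not just the chosen spanning $r+1$.

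Your step $(ii)$ proves something stronger than this lemma: it shows that $\tau'$ is genuinely a face of $\Delta_{\varepsilon'}(F)$, which is the content of the subsequent Proposition~\ref{lematauP}, not of this lemma (which only needs $\tau'$ to be a well-defined compact polytope of dimension $\dim(\tau)$). There the paper uses a determinant expansion $D'=D+O(\varepsilon-\varepsilon')$; your approach via a uniform gap $\delta$ is a reasonable alternative, but two of its steps need more care than you give them. First, the claim that ``the same strict inequality extends to every other point of $\rho_\varepsilon(N^\ast(F))$ outside $\tau$'' does not follow from convexity alone: a non-vertex point of $\rho_\varepsilon(N^\ast(F))$ could in principle sit arbitrarily close to the supporting hyperplane. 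Second, the asserted bijection between the vertices of $\Delta_\varepsilon(F)$ and those of $\Delta_{\varepsilon'}(F)$ needs an argument, since $\rho_\varepsilon(N^\ast(F))$ may be infinite; one must justify that no new point becomes a vertex after perturbation (this is true because the vertex region is bounded and only finitely many of the discrete set of projected points lie in any bounded region, but you should say so). In short: correct strategy, overshooting the target statement, and a couple of places where the uniformity over the possibly infinite set $\rho_\varepsilon(N^\ast(F))$ deserves an explicit word.
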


\begin{proof} By Proposition \ref{lemmaSlope}, if $\tau$ is a rational face, $\tau'$ is just the result of applying to~$\tau$ a homothety  of ratio $(n-k+\varepsilon)/(n-k+\varepsilon')$. While if $\tau$ is non-rational, we know by Proposition \ref{LemIrrational} that the points of $\tau\cap\rho_\varepsilon\bigl(N^\ast(F)\bigr)$ are exactly dim$(\tau)+1$ points, and have to be the vertices. Hence $\tau'$ is the image of $\tau$ by the correspondence $T_{\varepsilon,\varepsilon'}$.

For $\varepsilon-\varepsilon'$ small, the points $P$ and $P'$ are close enough and it is clear that $\tau$ and $\tau'$ have the same dimension.
\end{proof}

Moreover, $\tau$ is rational if and only if $\tau'$ is rational.

\begin{proposition}
For $\varepsilon-\varepsilon'$ sufficiently small, and under the conditions of Remark~\ref{epsCond}, we have that $\tau$ is a compact face of $\Delta_\varepsilon(F)$ if and only if $\tau'$ is a compact face of $\Delta_{\varepsilon'}(F)$.
\label{lematauP}
\end{proposition}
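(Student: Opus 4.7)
The plan is to use the preceding lemma to define the candidate face $\tau' = T_{\varepsilon,\varepsilon'}(\tau)$ and then promote it to an honest compact face of $\Delta_{\varepsilon'}(F)$ via a supporting-hyperplane argument. The hypothesis on $\varepsilon,\varepsilon'$ is symmetric, so proving only one direction suffices.

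First I would reduce to the case of compact facets. Any compact face of $\Delta_\varepsilon(F)$ is the intersection of the compact facets containing it, and under $T_{\varepsilon,\varepsilon'}$ both vertex incidence and inclusion relations are preserved; it therefore suffices to show that each compact facet $\sigma$ of $\Delta_\varepsilon(F)$ is sent by $T_{\varepsilon,\varepsilon'}$ to a compact facet of $\Delta_{\varepsilon'}(F)$. A crucial preliminary observation is that $\Delta_\varepsilon(F)$ has only finitely many vertices: these come from minimal elements (under the coordinatewise order) of the set of translates $\rho_\varepsilon\bigl(N^\ast(F)\bigr)+\R^m_{\geq 0}$, which in turn correspond to minimal elements of a subset of $\Z_{\geq 0}^{m+1}$, a finite set by Dickson's lemma. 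Thus only finitely many points of $\rho_\varepsilon\bigl(N^\ast(F)\bigr)$ matter for the boundary structure, and the continuity arguments below become uniform.

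Fix a compact facet $\sigma$ of $\Delta_\varepsilon(F)$ with supporting hyperplane $H\colon A_1x_1+\cdots+A_mx_m=B$. If $\sigma$ is rational, Proposition \ref{lemmaSlope} gives a common $Z$-exponent $k$ for all $(i_1,\ldots,i_m,k)$ with $\rho_\varepsilon(i_1,\ldots,i_m,k)\in\sigma$; one may take $A_i\in\Z_{>0}$ and $B=p/(n-k+\varepsilon)$, where $p=A_1i_1+\cdots+A_mi_m$ is the (common positive integer) value on $\sigma$. The candidate hyperplane for $\sigma':=T_{\varepsilon,\varepsilon'}(\sigma)$ is then $H'\colon A_1x_1+\cdots+A_mx_m=p/(n-k+\varepsilon')$. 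If $\sigma$ is non-rational, Proposition \ref{LemIrrational} shows $\sigma$ has exactly $m$ points of $\rho_\varepsilon\bigl(N^\ast(F)\bigr)$, which are its vertices; the unique hyperplane through the $T_{\varepsilon,\varepsilon'}$-images of these $m$ vertices varies continuously with $\varepsilon'$ and gives $H'$. In both cases, for any other relevant point $Q=\rho_\varepsilon(j_1,\ldots,j_m,k')$ not on $\sigma$ one has a strict inequality $A_1j_1+\cdots+A_mj_m>B(n-k'+\varepsilon)$ (or the analogous strict inequality in the non-rational case), and rescaling to the supporting condition for $\sigma'$ preserves strictness for $|\varepsilon-\varepsilon'|$ small enough by continuity, uniformly over the finitely many $Q$.

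The main obstacle is precisely securing this uniform preservation of strict inequalities: one must rule out any point of $\rho_{\varepsilon'}\bigl(N^\ast(F)\bigr)$ accidentally crossing the candidate hyperplane $H'$ under the perturbation. The finiteness supplied by Dickson's lemma is what converts the problem from a pointwise continuity statement into a uniform one; without it, points could in principle accumulate on $H'$ in the limit. Once this uniformity is in hand, the reduction to facets closes the argument, since the intersection of the perturbed facets through $\tau'$ equals $T_{\varepsilon,\varepsilon'}(\tau)$, showing that $\tau'$ is indeed a compact face of $\Delta_{\varepsilon'}(F)$ of the same dimension as $\tau$.
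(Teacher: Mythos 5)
Your proof is correct, and in outline it follows the same route as the paper: reduce to compact facets, transport the supporting hyperplane along $T_{\varepsilon,\varepsilon'}$, and show that no point of $\rho_{\varepsilon'}\bigl(N^\ast(F)\bigr)$ crosses to the wrong side. The genuine value you add is the explicit invocation of Dickson's lemma. The paper's argument rewrites the test determinant as $D'=D$ plus correction terms linear in $\varepsilon-\varepsilon'$ and concludes ``for $|\varepsilon-\varepsilon'|$ small enough we get a contradiction,'' but both $D$ and the correction coefficients $A_0,A_1$ grow without bound as the test point $P'$ ranges over all of $\rho_{\varepsilon'}\bigl(N^\ast(F)\bigr)$, so a uniform threshold is not automatic from that estimate alone. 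Your observation --- that the relevant candidate points come from the finitely many minimal elements of $N^\ast(F)\subset\Z_{\geq 0}^{m+1}$, a fixed set independent of $\varepsilon$ since each $\rho_\varepsilon$ is coordinatewise monotone --- is exactly what makes the threshold uniform, and the paper leaves this implicit. The remaining differences are essentially cosmetic: you fold the rational/non-rational dichotomy directly into the proof rather than citing it from the preceding lemma, and you replace the paper's explicit determinant decomposition $D'=D+\frac{\varepsilon-\varepsilon'}{n-\widetilde{k}+\varepsilon'}A_0+\frac{(n-\widetilde{k})(\varepsilon'-\varepsilon)}{n-\widetilde{k}+\varepsilon'}A_1$ by a softer continuity argument, which is sound but leaves the quantitative dependence of the threshold on the data less visible.
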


\begin{proof} Since $T_{\varepsilon,\varepsilon'}\circ T_{\varepsilon',\varepsilon}$ is the identity, we only have to prove one implication.
    Moreover, it is enough to prove the statement for facets because any face of smaller dimension must lie on the boundary of some compact facet. 

    Let then $\tau$ be a facet of $\Delta_\varepsilon(F)$. Let $P_1,\ldots,P_m\in\rho_\varepsilon\bigl(N^\ast(F)\bigr)$, 
$$
P_l=\biggl(\frac{i_1^{(l)}}{n-k^{(l)}+\varepsilon},\ldots,\frac{i_m^{(l)}}{n-k^{(l)}+\varepsilon}\biggr),\quad
\text{ for  $1\leq l\leq m$,}
$$
be $m$ vertices of $\Delta_\varepsilon(F)$ that define the hyperplane containing $\tau$. In other words, $\tau$~is contained in the hyperplane given by
$$
\left|\begin{array}{cccc}
1 & x_1 & \cdots & x_m\\
n-k^{(1)}+\varepsilon & i_1^{(1)}  & \cdots & i_m^{(1)}\\
\vdots & \vdots & & \vdots \\
n-k^{(m)}+\varepsilon & i_1^{(m)}  & \cdots & i_m^{(m)}\\
\end{array}\right|=0,
$$
where we write the equation ordering the points in such a way that $I<0$, where
$$
I=\left|\begin{array}{ccc}
i_1^{(1)} & \cdots & i_m^{(1)}\\
\vdots & & \vdots\\
i_1^{(m)} & \cdots & i_m^{(m)}\\
\end{array}\right|.
$$

The images of these points,
$$
P_l'=\frac{n-k^{(l)}+\varepsilon}{n-k^{(l)}+\varepsilon'}P_l,\quad \text{ for $1\leq l\leq m$},
$$
define the facet $\tau'$ for $\varepsilon-\varepsilon'$ sufficiently small. The defining hyperplane of $\tau'$ is given by
$$
\left|\begin{array}{cccc}
1 & x_1 & \cdots & x_m\\
n-k^{(1)}+\varepsilon' & i_1^{(1)}  & \cdots & i_m^{(1)}\\
\vdots & \vdots & & \vdots \\
n-k^{(m)}+\varepsilon' & i_1^{(m)}  & \cdots & i_m^{(m)}\\
\end{array}\right|=0.
$$
If $\tau'$ was not a facet of $\Delta_{\varepsilon'}(F)$, there must exist a vertex $P'=(p_1',\ldots,p_m')\in\rho_{\varepsilon'}\bigl(N^\ast(F)\bigr)$ such that $D'<0$, where
$$
D'=\left|\begin{array}{cccc}
1 & p_1' & \cdots & p_m'\\
n-k^{(1)}+\varepsilon' & i_1^{(1)}  & \cdots & i_m^{(1)}\\
\vdots & \vdots & & \vdots \\
n-k^{(m)}+\varepsilon' & i_1^{(m)}  & \cdots & i_m^{(m)}\\
\end{array}\right|.
$$
But since $\tau$ is a face of $\Delta_\varepsilon(F)$ we have $D>0$, where
$$
D=\left|\begin{array}{cccc}
1 & p_1 & \cdots & p_m\\
n-k^{(1)}+\varepsilon & i_1^{(1)}  & \cdots & i_m^{(1)}\\
\vdots & \vdots & & \vdots \\
n-k^{(m)}+\varepsilon & i_1^{(m)}  & \cdots & i_m^{(m)}\\
\end{array}\right|,
$$
and $P=(p_1,\ldots,p_m)$ is the image of $P'$ by the correspondence. 
For some $\widetilde{k}$ we have
$$
P=(p_1,\ldots,p_m)=\frac{n-\widetilde{k}+\varepsilon'}{n-\widetilde{k}+\varepsilon}P'\in\rho_\varepsilon\bigl(N^*(F)\bigr),
$$
and hence,
$$
D'=\frac{n-\widetilde{k}+\varepsilon}{n-\widetilde{k}+\varepsilon'}
\left|\begin{array}{cccc}
\frac{n-\widetilde{k}+\varepsilon'}{n-\widetilde{k}+\varepsilon} & p_1 & \cdots & p_m\\
n-k^{(1)}+\varepsilon' & i_1^{(1)}  & \cdots & i_m^{(1)}\\
\vdots & \vdots & & \vdots \\
n-k^{(m)}+\varepsilon' & i_1^{(m)}  & \cdots & i_m^{(m)}\\
\end{array}\right|,
$$
which, expanding the determinant along the first row, can be written as
$$
D'=I+\frac{n-\widetilde{k}+\varepsilon}{n-\widetilde{k}+\varepsilon'}
\bigl( -p_1 \det(M_1') + \cdots + (-1)^mp_m\det(M_m') \bigr),
$$
where $M_i'$ is the submatrix of $D'$ resulting from the deletion of the first row and the $(i+1)$-th column.

Now,  decompose $M_i'$ as follows
$$
M_i'=M_i^{(0)}+\varepsilon'M_i^{(1)}.
$$
Note that neither $M_i^{(0)}$ nor $M_i^{(1)}$ depend on $\varepsilon'$ or $\varepsilon$. Then,
$$
D'=I+\frac{n-\widetilde{k}+\varepsilon}{n-\widetilde{k}+\varepsilon'}(A_0+\varepsilon'A_1)
$$
where 
\begin{align*}
A_0 &= -p_1 \det \bigl( M_1^{(0)} \bigr) + \cdots + (-1)^mp_m \det \bigl( M_m^{(0)} \bigr), \\
A_1 &= -p_1 \det \bigl( M_1^{(1)} \bigr) + \cdots + (-1)^mp_m \det \bigl( M_m^{(1)} \bigr).
\end{align*}
Hence, we have
$$
D' = D + \frac{\varepsilon-\varepsilon'}{n-\widetilde{k}+\varepsilon'}A_0 +
\frac{(n-\widetilde{k})(\varepsilon'-\varepsilon)}{n-\widetilde{k}+\varepsilon'}A_1
$$
and for $|\varepsilon-\varepsilon'|$ small enough we get a contradiction since $D>0$.
\end{proof}

\begin{corollary}
In the above correspondence, if $\varepsilon,\varepsilon'$ are sufficiently small, we have  $P\in\partial\Delta_\varepsilon(F)$ if and only if $P'\in\partial\Delta_{\varepsilon'}(F)$.
\end{corollary}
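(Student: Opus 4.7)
My plan: Since $T_{\varepsilon,\varepsilon'} \circ T_{\varepsilon',\varepsilon}$ is the identity, only one direction needs to be shown; I will assume $P \in \partial\Delta_\varepsilon(F)$ and argue that $P' \in \partial\Delta_{\varepsilon'}(F)$. A point lies on $\partial\Delta_\varepsilon(F)$ precisely when it lies on some proper face, so I split the proof according to whether $P$ lies on a compact face of $\Delta_\varepsilon(F)$ or only on unbounded faces.

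Suppose first that $P$ lies on a compact face $\tau$. By Proposition~\ref{lematauP}, for $\varepsilon,\varepsilon'$ sufficiently close, $\tau$ corresponds to a compact face $\tau'$ of $\Delta_{\varepsilon'}(F)$, and it will suffice to show $P'\in\tau'$. If $\tau$ is non-rational, Proposition~\ref{LemIrrational} says that $\tau\cap\rho_\varepsilon(N^\ast(F))$ consists of exactly the $\dim(\tau)+1$ vertex-generators of $\tau$, so $P$ is one of them and, by construction of $\tau'$ in Proposition~\ref{lematauP}, $P'$ is the corresponding generator of $\tau'$. If $\tau$ is rational, Proposition~\ref{lemmaSlope} shows that every point of $\tau\cap\rho_\varepsilon(N^\ast(F))$ comes from a monomial with the same $Z$-exponent $k$, so $T_{\varepsilon,\varepsilon'}$ restricts on these points to a single homothety of ratio $(n-k+\varepsilon)/(n-k+\varepsilon')$; since the proof of Proposition~\ref{lematauP} identifies $\tau'$ as this same homothety of $\tau$, we obtain $P'\in\tau'$.

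Suppose now that $P$ lies only on unbounded faces of $\Delta_\varepsilon(F)$, and fix a supporting hyperplane $A\cdot x = B$ at $P$ with $A\geq 0$, $A\neq 0$, and some $A_s=0$. I adapt the determinant-perturbation argument from the proof of Proposition~\ref{lematauP}: for each $Q\in\rho_\varepsilon(N^\ast(F))$ the inequality $A\cdot Q\geq A\cdot P$ becomes a sign condition on an affine function of $\varepsilon$. Strict inequalities persist for $|\varepsilon-\varepsilon'|$ sufficiently small, while the tie configurations, that is the points $Q$ lying on the same unbounded face as $P$, must---by an $\varepsilon\notin\Q$ argument in the spirit of Proposition~\ref{lemmaSlope}---satisfy an arithmetic identity that transfers the equality $A\cdot Q = A\cdot P$ over to $A\cdot Q' = A\cdot P'$. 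Hence the image hyperplane supports $\Delta_{\varepsilon'}(F)$ at $P'$, placing $P'$ on the boundary.

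The main obstacle is this last step, since Proposition~\ref{lematauP} is formulated only for compact faces. The delicate point is handling the tie configurations: other points of $\rho_\varepsilon(N^\ast(F))$ may sit on the same unbounded face as $P$, and it is precisely there that the irrationality of $\varepsilon$ is invoked to ensure that the equalities survive the passage from $\varepsilon$ to $\varepsilon'$.
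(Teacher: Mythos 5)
The paper gives no proof of this corollary, presenting it as an immediate consequence of Proposition~\ref{lematauP}. You correctly flag that the proposition only treats \emph{compact} faces, while a point of $\rho_\varepsilon\bigl(N^\ast(F)\bigr)$ can lie on $\partial\Delta_\varepsilon(F)$ while meeting only an unbounded face (already in the surface case: a point in the relative interior of the horizontal or vertical unbounded edge, strictly past the last vertex). This is a real observation, not an over-cautious one, and your split of the proof into ``$P$ on a compact face'' versus ``$P$ only on unbounded faces'' is exactly the right structure. The compact-face half of your argument, invoking Proposition~\ref{lematauP} and tracking the scaling factor $(n-k+\varepsilon)/(n-k+\varepsilon')$ in the rational case and Proposition~\ref{LemIrrational} in the non-rational case, is sound.

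The unbounded-face half, however, is only a sketch and has two technical gaps you should close. First, the ``determinant-perturbation argument'' of Proposition~\ref{lematauP} is built around a hyperplane spanned by $m$ points of $\rho_\varepsilon\bigl(N^\ast(F)\bigr)$; for an unbounded facet with recession cone $\R_{\geq0}^T$ the supporting hyperplane is spanned by only $m-|T|$ such points together with the coordinate directions $e_i$ for $i\in T$, so the determinant must be modified by replacing the corresponding rows $(n-k^{(l)}+\varepsilon,\, i_1^{(l)},\dots,i_m^{(l)})$ with rows of the shape $(0,\,e_i)$. Second, when you say ``the inequality $A\cdot Q\geq A\cdot P$ becomes a sign condition on an affine function of $\varepsilon$,'' you are implicitly treating $A$ as independent of $\varepsilon$; but for a non-rational unbounded face the normal direction $A$ generally does move with $\varepsilon$, and one must either pin $A$ down via the modified determinant (so that its $\varepsilon$-dependence is explicit and affine) or first reduce, via the irrationality argument, to the case where the vertices of the unbounded face all share the same $Z$-exponent $k$ so that $A$ can be taken fixed. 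With those two points addressed, the persistence of strict inequalities for $|\varepsilon-\varepsilon'|$ small and the transfer of equalities by the $\varepsilon\notin\Q$ argument do complete the proof.
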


Therefore we have proved that for $\varepsilon-\varepsilon'$ small enough, the polyhedra $\Delta_\varepsilon(F)$ and $\Delta_{\varepsilon'}(F)$ {\em look the same}, as the next result claims.

\begin{theorem}\label{th1}
Let $\varepsilon$, $\varepsilon'\notin\Q$, with $0<\varepsilon,\varepsilon'<1$ and $d\in\Z_{\geq 0}$. If  $\varepsilon-\varepsilon'$ is sufficiently small, the polyhedra $\Delta_\varepsilon(F)$ and $\Delta_{\varepsilon'}(F)$ have the same number of faces of dimension~$d$. Even more, the points on each face are the corresponding projection of the same point in $N^\ast(F)$.
\end{theorem}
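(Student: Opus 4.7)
The plan is to bootstrap the theorem directly from Proposition~\ref{lematauP} and the lemma preceding it, using a finiteness argument to turn the pointwise ``$\varepsilon - \varepsilon'$ sufficiently small'' condition into a single uniform bound. I would first note that $\Delta_\varepsilon(F)$ has only finitely many faces: although $N^\ast(F)$ may be infinite, only finitely many points of $\rho_\varepsilon(N^\ast(F))$ contribute vertices or lie on compact faces (those with at least one coordinate bounded in terms of the initial data). Proposition~\ref{lematauP} assigns to each such candidate compact facet a threshold under which $T_{\varepsilon,\varepsilon'}$ sends it bijectively to a compact facet of $\Delta_{\varepsilon'}(F)$. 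Taking the minimum of this finite collection of thresholds yields a single $\delta > 0$ that works simultaneously for every compact facet.

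Next I would pass from facets to faces of arbitrary dimension $d$. Any face of a convex polyhedron is the intersection of the facets containing it, so the facet bijection induced by $T_{\varepsilon,\varepsilon'}$ extends to a candidate correspondence on $d$-dimensional faces by intersecting the matching facets on the $\varepsilon'$ side. The lemma preceding Proposition~\ref{lematauP} guarantees that $T_{\varepsilon,\varepsilon'}$ preserves dimension on each compact face, so these intersections have the right dimension on both sides, and the bijection descends to faces of each dimension $d$.

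For the non-compact faces I would use that the recession cone $\R_{\geq 0}^m$ is independent of $\varepsilon$: every face of $\Delta_\varepsilon(F)$ decomposes as a Minkowski sum of a compact face of its ``floor'' and a face of $\R_{\geq 0}^m$. Since the compact part is matched by the previous paragraph and the cone part is matched trivially, the bijection extends to all faces. The second assertion (``the points on each face are the corresponding projection of the same point in $N^\ast(F)$'') is then immediate, because $T_{\varepsilon,\varepsilon'}$ is defined pointwise on the underlying element $(i_1,\ldots,i_m,k) \in N^\ast(F)$, modifying only the denominator $n-k+\varepsilon$.

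The main obstacle I foresee is the bookkeeping for the unbounded faces and the verification that no ``phantom'' face appears or disappears under the perturbation. For compact facets this is exactly the content of Proposition~\ref{lematauP}; for lower-dimensional compact faces one must check that the intersections of matching facets do not accidentally collapse or split. Both issues are controlled by choosing $|\varepsilon - \varepsilon'|$ smaller than a single finite collection of thresholds and invoking the dimension-preservation lemma.
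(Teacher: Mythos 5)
Your proposal is correct and follows essentially the same route as the paper: the theorem is presented there without a separate proof, as a summary of the preceding unnumbered Lemma (dimension preservation under $T_{\varepsilon,\varepsilon'}$) and Proposition~\ref{lematauP} (compact face iff compact face). You make explicit two points the paper glosses over — passing from a per-face threshold to a single uniform $\delta$ via finiteness of the face lattice, and extending the bijection from compact faces to unbounded ones through the common recession cone $\R_{\geq 0}^m$ — which are welcome clarifications but do not constitute a different approach.
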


As a consequence, polytopes $\Delta_\varepsilon(F)$ and $\Delta_{\varepsilon'}(F)$, for $\varepsilon,\varepsilon'$ small enough, though not the same, have the same information, and therefore we can talk of {\em the} perturbed Newton-Hironaka polytope, understanding that it is a family of polytopes with the same properties. We will call the {\em blurred polytope} a generic polytope of this family.

\begin{remark}
Note that if we compare the polytopes $\Delta_0(F)$ and $\Delta_\varepsilon(F)$, the proof of Proposition \ref{lematauP} also holds, and we get that, for $\varepsilon$ sufficiently small, we avoid the problem described in Example \ref{ejeps}, and every face of $\Delta_0(F)$ correspond to a face of $\Delta_\varepsilon(F)$ of the same dimension. 

Moreover, there may be new faces in $\Delta_\varepsilon(F)$, as we will see in the examples at the end of next section.
\label{Obs7}
\end{remark}

\section{Some useful information to be found in $\Delta_\varepsilon(F)$}

Now that we have compared the pertubed and the classical polytope, we proceed to search this object for useful information in terms of resolution of singularities. First we recall two important concepts.

\begin{definition}
The tangent cone of $\cS$, denoted by $\cC (\cS)$, or simply $\cC$ if no confusion arises, is the projective variety defined by
$\overline{F}$, the initial form of $F$, on $\bP^m (K)$.
\label{TC}
\end{definition}

\begin{definition}
With the above notations, let $\fp$ be a prime, non-maximal, ideal on $K[[X_1,\ldots,X_m,Z]]$ satisfying:
\begin{enumerate}
\item[(a)] $F \in \fp^n$.
\item[(b)] There are $m$ power series, $G_1,\ldots,G_m \in \fp$ such that
$\mbox{ord} (G_i) = 1$ and $\fp = (G_1,\ldots,G_m)$.
\end{enumerate}
Such a prime ideal will be called a permissible variety of $\cS$.
\end{definition}

This notion of permissible varieties agrees with the one derived from normal flatness in the work of Hironaka \cite{H2} (which is equivalent to being an equimultiple smooth subscheme).

The tangent cone is a very important invariant to keep track of during the resolution process. For a Weierstrass equation, the tangent cone is defined by $\overline{F}$, which means that, in $\Delta_0(F)$, the monomials involved correspond to points in the hyperplane  $x_1+\cdots+x_m=1$ in $\R_{\geq0}^m$. In $\Delta_\varepsilon(F)$  these points cannot be located so precisely, but they do not mix with other points, for a properly chosen $\varepsilon$, as we see in the next result.

\begin{lemma}
Let $(i_1,\ldots,i_m,k) \in N^\ast(F)$, with $i_1+\ldots+i_m+k=n$. Then, for $\varepsilon>0$, the point $\rho_\varepsilon (i_1,\ldots,i_m,k)$ lies in the region
$$
\biggl\{ 1> x_1+\cdots+x_m > 1-\varepsilon \biggr\} \subset \R_{\geq0}^m.
$$
Moreover, if $0<\varepsilon<1$, these are all the possible points of $\Delta_\varepsilon (F)$ in the above region.
\label{Lema8}
\end{lemma}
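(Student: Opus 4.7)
The plan is to reduce the problem to a one-variable estimate on the sum of coordinates of $\rho_\varepsilon(i_1,\ldots,i_m,k)$. Setting $s = i_1 + \cdots + i_m$, I would first observe that
$$
\sum_{j=1}^m \frac{i_j}{n-k+\varepsilon} = \frac{s}{n-k+\varepsilon},
$$
and recall that the defining condition on $N(F)$ forces $s + k \geq n$, that is, $s \geq n-k$, with equality precisely when the monomial lies in the tangent cone.

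For the first claim I would substitute $s = n-k$ to obtain
$$
\frac{s}{n-k+\varepsilon} = \frac{n-k}{n-k+\varepsilon} = 1 - \frac{\varepsilon}{n-k+\varepsilon}.
$$
The upper bound $< 1$ is immediate from $\varepsilon > 0$. The lower bound $> 1-\varepsilon$ is equivalent to $n-k+\varepsilon > 1$; this holds because $s = n-k \geq 1$, as the alternative $s=0$ would force $k=n$ and hence identify the point with $(0,\ldots,0,n)$, which is excluded from $N^\ast(F)$.

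For the converse (\emph{moreover}) assertion, assume $0 < \varepsilon < 1$ and suppose, toward a contradiction, that some $(i_1,\ldots,i_m,k) \in N^\ast(F)$ with $s + k > n$ (equivalently, $s \geq n-k+1$, since $s$ and $n-k$ are integers) maps into the slab. Then
$$
\frac{s}{n-k+\varepsilon} \;\geq\; \frac{n-k+1}{n-k+\varepsilon} \;>\; 1,
$$
where the strict inequality uses $\varepsilon < 1$. This contradicts the upper edge of the slab.

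I do not anticipate any serious obstacle; the argument amounts to the two elementary estimates above. The crucial role of the hypothesis $\varepsilon < 1$ is precisely to ensure that bumping $s$ up by one integer unit pushes the sum past $1$, so that only the tangent-cone monomials (those with $s+k = n$) can land in the slab $1-\varepsilon < x_1 + \cdots + x_m < 1$.
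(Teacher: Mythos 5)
Your proof is correct and follows essentially the same approach as the paper: the same two elementary estimates, with the upper bound from $\varepsilon>0$, the lower bound from $n-k\geq 1$, and the converse from $s\geq n-k+1$ combined with $\varepsilon<1$. The only cosmetic difference is that you spell out why $n-k\geq 1$ (which in fact already follows from the Weierstrass form, where $k$ ranges over $0,\ldots,n-1$), whereas the paper simply asserts it.
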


\begin{figure}[htbp]
    \centering
    \begin{tikzpicture}[scale=3]
        \def\varepsilonhalf{8pt}
        \fill[black!20] (1,0) -- (1cm-\varepsilonhalf,0)--(0,1cm-\varepsilonhalf)--(0,1) -- cycle;
        \draw[thin, ->] (0,0) -- (1.25,0);
        \draw[thin, ->] (0,0) -- (0,1.25);
        
        \draw[very thin] (1cm-\varepsilonhalf,0) -- (1cm-\varepsilonhalf,-3pt) node[below,] {$1-\varepsilon$};
        \draw[very thin] (0,1cm-\varepsilonhalf) -- (-3pt,1cm-\varepsilonhalf) node[left,] {$1-\varepsilon$};
        \draw[very thin] (1,0) -- (1,-3pt) node[below] {$1$};
        \draw[very thin] (0,1) -- (-3pt,1) node[left] {$1$};
        
        \draw[very thick] (0,1cm) -- (0,1cm-\varepsilonhalf);
        \draw[very thick] (1cm,0) -- (1cm-\varepsilonhalf,0);
        \draw[very thick, dashed]  (0,1cm-\varepsilonhalf)-- (1cm-\varepsilonhalf,0) ;
        \draw[very thick, dashed]  (0,1cm)-- (1cm,0) ;
        
        
    \end{tikzpicture}
    \caption{Every point representing a monomial  in the initial form of~$F$ lies in the shaded area.}
    \label{fig:label1}
\end{figure}
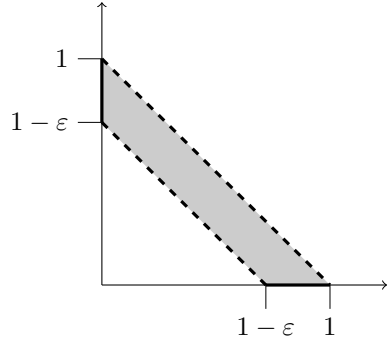

\begin{proof}
Take $(i_1,\ldots,i_m,k) \in N^\ast(F)$ with $i_1+\cdots+i_m+k=n$. Clearly
$$
\frac{i_1+\cdots+i_m}{n-k+\varepsilon} < \frac{i_1+\cdots+i_m}{n-k} = 1,
$$
and, on the other hand, since $n-k \geq 1$,
$$
\frac{i_1+\cdots+i_m}{n-k+\varepsilon} = \frac{n-k}{n-k+\varepsilon} = 1 - \frac{\varepsilon}{n-k+\varepsilon} >  1 -\varepsilon.
$$

Take now another point $(i_1,\ldots,i_m,k) \in N^\ast(F)$, with $i_1+\cdots+i_m+k>n$. It is then clear that $\rho_\varepsilon (i_1,\ldots,i_m,k)$ must be in the region $\{x_1+\cdots+x_m>1\} \subset \R^m_{\geq 0}$, provided that $\varepsilon < 1$, since
\[
\frac{i_1+\cdots+i_m}{n-k+\varepsilon} \geq \frac{n-k+1}{n-k+\varepsilon} > 1.
\qedhere
\]
\end{proof}

\begin{remark}
Over a field of characteristic zero, and after  a Tchirnhausen transformation of~$F$, we can assume that $n-k>1$. Then, the region containing the points in the tangent cone is in fact
$$
\biggl\{ 1> x_1+\cdots+x_m > 1-\frac{\varepsilon}{2} \biggr\} \subset \R_{\geq0}^m.
$$
\end{remark}

Another interesting feature of $\Delta_0(F)$ is that it is very easy to decide whether or not any linear variety defined by ideals of the form 
$$
\fp = ( Z,X_{i_1},\ldots,X_{i_r} ), \quad\text{ with }\quad i_1<i_2<\ldots <i_r, \; 1\leq r\leq m,
$$
is permissible. It is not very restrictive, since every permissible variety can be written in this form, after an appropriate change of variables. In particular, $\fp$ being permissible is equivalent to 
$$
\Delta_0 (F) \subset \{x_{i_1}+\cdots+x_{i_r} \geq 1\} \subset \R_{\geq0}^m.
$$

This still holds, suitably modified, with our setting.

\begin{lemma}\label{lemma9}
For a Weierstrass equation $F$ and $\varepsilon$ in the above conditions, the ideal $\fp = (Z,X_{i_1},\ldots,X_{i_r})$, with $1\leq r\leq m$, is permissible if and only if
$$
\Delta_\varepsilon (F) \subset \biggl\{\sum_{l=1}^r x_{i_l} \geq  1\biggr\} \bigcup \biggl\{ 1-\varepsilon < \sum_{l=1}^r x_{i_l} < 1, \; x_i=0 \mbox{ for all }i\notin\{i_1,\ldots,i_r\} \biggr\}.
$$
\end{lemma}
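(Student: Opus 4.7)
The plan is to mirror the classical argument by translating permissibility into a region condition via $\rho_\varepsilon$, first at the level of projected monomial points and then extending by convexity and upward closure. The central calculation is that for a monomial $X_1^{a_1}\cdots X_m^{a_m}Z^k$ of $F$ with projected point $P$, one has $\sum_{l=1}^r P_{i_l}=\frac{\sum_l a_{i_l}}{n-k+\varepsilon}$, so membership in the half-space $\{\sum_l x_{i_l}\geq 1\}$ is equivalent to $\sum_l a_{i_l}\geq n-k+\varepsilon$, and by integrality of $\sum_l a_{i_l}$ together with $0<\varepsilon<1$ this is the same as $\sum_l a_{i_l}\geq n-k+1$. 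The same formula combined with the bounds in Lemma~\ref{Lema8} places $P$ in the strip $(1-\varepsilon,1)$ precisely when $\sum_l a_{i_l}=n-k$, since $\frac{n-k}{n-k+\varepsilon}\in(1-\varepsilon,1)$ whenever $n-k\geq 1$.

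For the direction $(\Leftarrow)$, assume $\Delta_\varepsilon(F)$ is contained in the stated union $U$ and take any $(a_1,\ldots,a_m,k)\in N^\ast(F)$. If its image lies in the first region, the equivalence above gives $\sum_l a_{i_l}\geq n-k+1>n-k$. If its image lies in the second region, the vanishing $x_i=0$ for $i\notin\{i_1,\ldots,i_r\}$ forces $a_i=0$ there, so the defining inequality $\sum_j a_j+k\geq n$ of $N^\ast(F)$ reduces immediately to $\sum_l a_{i_l}+k\geq n$. In either case, the monomial lies in $\fp^n$, hence $F\in\fp^n$ and $\fp$ is permissible.

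For $(\Rightarrow)$, assume $\fp$ is permissible, so every monomial of $F$ satisfies $\sum_l a_{i_l}\geq n-k$. In the strict case $\sum_l a_{i_l}\geq n-k+1$ the computation above yields $\sum_l x_{i_l}\geq\frac{n-k+1}{n-k+\varepsilon}>1$ (using $\varepsilon<1$), placing the image in the first region of $U$. In the saturated case $\sum_l a_{i_l}=n-k$ the image lies in the strip, and for it to lie in the second region of $U$ we need $a_i=0$ for $i\notin\{i_1,\ldots,i_r\}$; this additional constraint is precisely the arithmetic content of normal flatness, to which the paper identifies the condition $F\in\fp^n$ just after the definition of permissible variety: a monomial exactly saturating the $\fp$-adic filtration cannot carry nontrivial factors transverse to $V(\fp)$ without breaking equimultiplicity along it. Once verified for the projections of $N^\ast(F)$, the inclusion extends to all of $\Delta_\varepsilon(F)$: the first region is a convex half-space closed under $\R_{\geq 0}^m$-translates, and the points contributing to the second region sit on the coordinate subspace $\{x_i=0:i\notin\{i_1,\ldots,i_r\}\}$, where the description of the second region of $U$ is itself convex and so compatible with taking convex hulls within $\Delta_\varepsilon(F)$.

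The hardest step is the saturated subcase of $(\Rightarrow)$: the equality $\sum_l a_{i_l}=n-k$ alone does not preclude $a_i>0$ for some $i\notin\{i_1,\ldots,i_r\}$, and here the equimultiplicity formulation of permissibility, rather than the bare containment $F\in\fp^n$, must be invoked. The rest of the argument is a careful bookkeeping translation between integer exponents and $\rho_\varepsilon$-coordinates, with Lemma~\ref{Lema8} providing the uniform control of the threshold $\frac{n-k}{n-k+\varepsilon}$.
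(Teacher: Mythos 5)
The proposal follows the same overall route as the paper: translate permissibility of $\fp$ into the integer criterion $\sum_l a_{i_l}+k\geq n$ for every $(a_1,\dots,a_m,k)\in N^\ast(F)$, separate the strict case from the saturated case $\sum_l a_{i_l}+k=n$, and handle $(\Leftarrow)$ by reading off the inequality from membership in each region. You deserve credit for isolating the saturated case as the genuine difficulty and for noticing that the equality $\sum_l a_{i_l}=n-k$ alone does not force $a_i=0$ for $i\notin\{i_1,\dots,i_r\}$; the paper's own proof asserts exactly this (``these points are in the tangent cone region (with $x_i=0$\dots)'') without argument, so you have found where both proofs must carry the weight.

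The trouble is that the repair you offer does not hold. The claim that ``a monomial exactly saturating the $\fp$-adic filtration cannot carry nontrivial factors transverse to $V(\fp)$ without breaking equimultiplicity'' is false for the paper's definition of permissibility, which asks only that $F\in\fp^n$ and that $V(\fp)$ be smooth. Take $F=Z^2+X^2Y$ in $K[[X,Y,Z]]$ and $\fp=(Z,X)$: the monomial $X^2Y$ has $a_1+k=2=n$ yet $a_2=1\neq 0$, while $F\in\fp^2$ and $V(\fp)$ is smooth, so $\fp$ is permissible. In the blurred picture, $\rho_\varepsilon(2,1,0)=\bigl(\tfrac{2}{2+\varepsilon},\tfrac{1}{2+\varepsilon}\bigr)$ has first coordinate in $(1-\varepsilon,1)$ but second coordinate nonzero, so it lies in neither region of $U$. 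Thus normal flatness provides no escape here; the saturated subcase is not dispatched by the argument you give. Separately, the closing convexity step also fails: the second region of $U$ sits inside the coordinate subspace $\{x_i=0:i\notin\{i_1,\dots,i_r\}\}$ and is therefore not stable under $\R_{\geq0}^m$-translation, so even if every point of $\rho_\varepsilon\bigl(N^\ast(F)\bigr)$ were in $U$, the translated orthants generating $\Delta_\varepsilon(F)$ would immediately leave $U$. A correct argument would have to reformulate the containment (for example in terms of $\rho_\varepsilon\bigl(N^\ast(F)\bigr)$ and the single half-space $\{\sum_l x_{i_l}>1-\varepsilon\}$) rather than lean on the stated region $U$ as a closed target for the polytope.
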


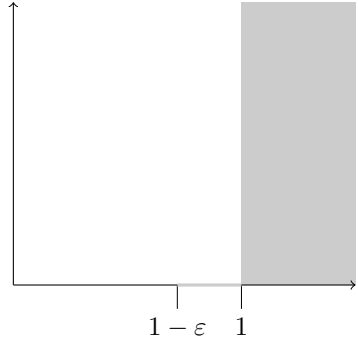
\begin{figure}[htbp]
    \centering
    \begin{tikzpicture}[scale=3]
        \def\varepsilonhalf{8pt}
        \fill[black!20]  (1cm,0)--(1.5,0)--(1.5,1.25)-- (1,1.25) -- cycle;
        \draw[thin, ->] (0,0) -- (1.5,0);
        \draw[thin, ->] (0,0) -- (0,1.25);
        
        \draw[very thin] (1cm-\varepsilonhalf,0) -- (1cm-\varepsilonhalf,-3pt) node[below,] {$1-\varepsilon$};
        \draw[very thin] (1,0) -- (1,-3pt) node[below] {$1$};
        \draw[black!20, very thick] (1,0)--(1cm-\varepsilonhalf,0);

    \end{tikzpicture}
    \caption{The ideal $(Z,X)$ is permissible if and only if $\rho_{\varepsilon}\bigl(N(F)\bigr)$ lies in the shaded region. Note the half open segment on the horizontal axis. }
    \label{fig:label2}
\end{figure}

\begin{proof}
We already know that $(Z,X_{i_1},\ldots,X_{i_r})$ is permissible if and only if 
\[j_{i_1}+\cdots+j_{i_r}+k \geq n ,\]
for all $(j_1,\cdots,j_m,k) \in N^\ast(F)$. 
We have two cases:
\begin{itemize}
\item[Case 1:] $j_{i_1}+\cdots+j_{i_r}+k=n$. These points are in the tangent cone region (with $x_i=0$ for $i\in\{1,\ldots,m\}\setminus\{i_1,\ldots,i_r\}$).
\item[Case 2:] Otherwise $j_{i_1}+\cdots+j_{i_r}+k>n$. Since $0<\varepsilon<1$ and $j_l,k,n \in \Z$ this is equivalent to $j_{i_1}+\cdots+j_{i_r}+k>n+\varepsilon$.
\end{itemize}
The converse statement is also easy.
\end{proof}

\begin{remark}
If  $\operatorname{char}(K)=0$, then we can assume $n-k>1$ and the condition for $(Z,X_{i_1},\ldots,X_{i_r})$ to be permissible is
$$
\Delta_\varepsilon (F) \subset \biggl\{\sum_{l=1}^r x_{i_l} \geq  1\biggr\} \bigcup \biggl\{ 1-\frac{\varepsilon}{2} < \sum_{l=1}^r x_{i_l} < 1, \; x_i=0 \mbox{ for all }i\notin\{i_1,\ldots,i_r\} \biggr\}.
$$
\end{remark}

In previous computations in \cite{LZ0} regarding $\Delta_0 (F)$ for the case of surfaces, the most complex configuration was the so called {\em binomial segments.}

\begin{definition}
If $\cS$ is an embedded algebroid surface defined by $F$, a binomial segment of $\Delta_0 (F)$ is a segment in $\partial \Delta_0 (F)$ which contains all the points representing monomials of an expression $X^i(Y-\alpha X)^j Z^k$.
\label{binSeg}
\end{definition}

Notice that binomial segments must have slope $-1$. From Proposition \ref{lemmaSlope}, we see that binomial segments can also occur in $\Delta_\varepsilon(F)$, but there is a fundamental difference between the two cases. In fact, as shown in Example \ref{nhpolygon},  binomial segments can be immune to change of variables in $K[[X,Y]]$. However, this is not the case anymore if we replace $\rho$ by $\rho_\varepsilon$.

\begin{example}
We recall Example \ref{nhpolygon}, and compare the polygons $\Delta(F)$ and $\Delta_\varepsilon(F)$ (see Figure~\ref{f1}).

\begin{figure}[htbp]
    \centering
\begin{tikzpicture}
\tikzset{
    every point/.style = {circle, inner sep={1.75\pgflinewidth}, 
        opacity=1, draw, solid, fill
    },
    point/.style={insert path={node[every point, #1]{}}}, point/.default={},
    point name/.style = {insert path={coordinate (#1)}},
}
\fill[black!20] (3.000000,0.000000)  -- (3.000000,3.000000)  -- (0.000000,3.000000)  -- (0.000000,2.000000)  -- (2.000000,0.000000)  -- cycle;
\draw[->] (0.000000,0.000000)  -- (3.000000,0.000000)  ;
\draw[->] (0.000000,0.000000)  -- (0.000000,3.000000)  ;
\draw[very thin] (0,0) -- (0,-3pt) node[below] {$0$};
\draw[very thin] (1,0) -- (1,-3pt) node[below] {$1$};
\draw[very thin] (2,0) -- (2,-3pt) node[below] {$2$};
\draw[very thin] (0,1) -- (-3pt,1) node[left] {$1$}; 
\draw[very thin] (0,2) -- (-3pt,2) node[left] {$2$}; 
    \draw (0.000000,2.000000)  [point];
    \draw (0.000000,2.000000)  [point];
    \draw (0.500000,1.500000)  [point];
    \draw (0.250000,1.750000)  [point];
    \draw (1.000000,1.000000)  [point];
    \draw (0.500000,1.500000)  [point];
    \draw (1.500000,0.500000)  [point];
    \draw (0.750000,1.250000)  [point];
    \draw (2.000000,0.000000)  [point];
    \draw (1.000000,1.000000)  [point];
    \draw (1.250000,0.750000)  [point];
    \draw (1.500000,0.500000)  [point];
    \draw (1.750000,0.250000)  [point];
    \draw (2.000000,0.000000)  [point];
\end{tikzpicture}
\qquad
\begin{tikzpicture}
\tikzset{
    every point/.style = {circle, inner sep={1.75\pgflinewidth}, 
        opacity=1, draw, solid, fill
    },
    point/.style={insert path={node[every point, #1]{}}}, point/.default={},
    point name/.style = {insert path={coordinate (#1)}},
}
\fill[black!20] (3.000000,0.000000)  -- (1.750000,0.000000)  -- (0.000000,1.750000)  -- (0.000000,3.000000)  -- (3.000000,3.000000)  -- cycle;
\draw[->] (0.000000,0.000000)  -- (3.000000,0.000000)  ;
\draw[->] (0.000000,0.000000)  -- (0.000000,3.000000)  ;
\draw[very thin] (0,0) -- (0,-3pt) node[below] {$0$};
\draw[very thin] (1,0) -- (1,-3pt) node[below] {$1$};
\draw[very thin] (2,0) -- (2,-3pt) node[below] {$2$};
\draw[very thin] (0,1) -- (-3pt,1) node[left] {$1$}; 
\draw[very thin] (0,2) -- (-3pt,2) node[left] {$2$}; 
    \draw (0.000000,1.750000)  [point];
    \draw (0.000000,1.866667)  [point];
    \draw (0.437500,1.312500)  [point];
    \draw (0.233333,1.633333)  [point];
    \draw (0.875000,0.875000)  [point];
    \draw (0.466667,1.400000)  [point];
    \draw (1.312500,0.437500)  [point];
    \draw (0.700000,1.166667)  [point];
    \draw (1.750000,0.000000)  [point];
    \draw (0.933333,0.933333)  [point];
    \draw (1.166667,0.700000)  [point];
    \draw (1.400000,0.466667)  [point];
    \draw (1.633333,0.233333)  [point];
    \draw (1.866667,0.000000)  [point];
\end{tikzpicture}
    
    \caption{Newton-Hironaka polygons of $F = Z^4 + (Y-X)^4Z^2 + (Y+3X)^8$. Left to right, the classical and the perturbed polygons.}\label{f1}
\end{figure}

\end{example}

Actually, as a consequence of Proposition \ref{lemmaSlope}, we deduce that the border of the polytope $\Delta_\varepsilon(F)$ might not just be a homothety of $\Delta_0(F)$, as we see in the following examples. This implies that $\Delta_\varepsilon (F)$ can have more compact facets than $\Delta_0(F)$ and, in fact, one can construct easy examples in which $\Delta_0(F)$ has only one compact face, while $\Delta_\varepsilon(F)$ has as many as one wants.

\begin{example}
Consider the equation $F=Z^4+(Y^2+XY)Z^2+X^4$. We compare the polygons $\Delta(F)$ and $\Delta_\varepsilon(F)$ in  Figure~\ref{f2}. We have only one compact face in $\Delta(F)$, while in $\Delta_\varepsilon (F)$ there are two compact faces.
\label{ex1}
\end{example}

\begin{figure}[htbp]
    \centering
\begin{tikzpicture}[scale=1.5]
\tikzset{
    every point/.style = {circle, inner sep={1.75\pgflinewidth}, 
        opacity=1, draw, solid, fill
    },
    point/.style={insert path={node[every point, #1]{}}}, point/.default={},
    point name/.style = {insert path={coordinate (#1)}},
}
\fill[black!20] (2.000000,2.000000)  -- (0.000000,2.000000)  -- (0.000000,1.000000)  -- (1.000000,0.000000)  -- (2.000000,0.000000)  -- cycle;
\draw[->] (0.000000,0.000000)  -- (2.000000,0.000000)  ;
\draw[->] (0.000000,0.000000)  -- (0.000000,2.000000)  ;
\draw[very thin] (0,0) -- (0,-3pt) node[below] {$0$};
\draw[very thin] (1,0) -- (1,-3pt) node[below] {$1$};
\draw[very thin] (0,1) -- (-3pt,1) node[left] {$1$}; 
    \draw (0.000000,1.000000)  [point];
    \draw (0.500000,0.500000)  [point];
    \draw (1.000000,0.000000)  [point];
\end{tikzpicture}
\qquad
\begin{tikzpicture}[scale=1.5]
\tikzset{
    every point/.style = {circle, inner sep={1.75\pgflinewidth}, 
        opacity=1, draw, solid, fill
    },
    point/.style={insert path={node[every point, #1]{}}}, point/.default={},
    point name/.style = {insert path={coordinate (#1)}},
}
\fill[black!20] (2.000000,0.000000)  -- (0.816327,0.000000)  -- (0.344828,0.344828)  -- (0.000000,0.689655)  -- (0.000000,2.000000)  -- (2.000000,2.000000)  -- cycle;
\draw[->] (0.000000,0.000000)  -- (2.000000,0.000000)  ;
\draw[->] (0.000000,0.000000)  -- (0.000000,2.000000)  ;
\draw[very thin] (0,0) -- (0,-3pt) node[below] {$0$};
\draw[very thin] (1,0) -- (1,-3pt) node[below] {$1$};
\draw[very thin] (0,1) -- (-3pt,1) node[left] {$1$}; 
    \draw (0.000000,0.689655)  [point];
    \draw (0.344828,0.344828)  [point];
    \draw (0.816327,0.000000)  [point];
\end{tikzpicture}

    \caption{Newton-Hironaka polygons of $F=Z^4+(Y^2+XY)Z^2+X^4$.}
    \label{f2}
\end{figure}

\begin{example}
Consider the equation $F=Z^8+(Y^5+XY^4)Z^3+X^5YZ^2+(X^7+X^{10})Z+Y^{10}$. The classical polygon $\Delta(F)$ consists of only one compact face, while the blurred polygon has three compact faces.
\label{ex2}
\end{example}

\begin{figure}[htbp]
    \begin{tikzpicture}[scale=3]
    \tikzset{
        every point/.style = {circle, inner sep={1.75\pgflinewidth}, 
            opacity=1, draw, solid, fill
        },
        point/.style={insert path={node[every point, #1]{}}}, point/.default={},
        point name/.style = {insert path={coordinate (#1)}},
    }
    \fill[black!20] (1.5,1.5)  -- (1.5000000,0.000000)  -- (1.000000,0.000000)  -- (0.000000,1.000000)  -- (0.000000,1.5)  -- cycle;
    \draw[->] (0.000000,0.000000)  -- (1.5,0)  ;
    \draw[->] (0.000000,0.000000)  -- (0,1.5)  ;
    \draw[very thin] (0,0) -- (0,-3pt) node[below] {$0$};
    \draw[very thin] (1,0) -- (1,-3pt) node[below] {$1$};
    \draw[very thin] (0,1) -- (-3pt,1) node[left] {$1$}; 
        \draw (0.000000,1.000000)  [point];
        \draw (0.000000,1.250000)  [point];
        \draw (0.000000,1.428571)  [point];
        \draw (0.200000,0.800000)  [point];
        \draw (0.833333,0.166667)  [point];
        \draw (1.000000,0.000000)  [point];
    \end{tikzpicture}
    \qquad
\begin{tikzpicture}[scale=3]
\tikzset{
    every point/.style = {circle, inner sep={1.75\pgflinewidth}, 
        opacity=1, draw, solid, fill
    },
    point/.style={insert path={node[every point, #1]{}}}, point/.default={},
    point name/.style = {insert path={coordinate (#1)}},
}
\fill[black!20] (1.500000,0.000000)  -- (0.875011,0.000000)  -- (0.714296,0.102859)  -- (0.166669,0.626678)  -- (0.000000,0.833347)  -- (0.000000,1.5)  -- (1.5,1.5)  -- cycle;
\draw[->] (0.000000,0.000000)  -- (1.5,0.000000)  ;
\draw[->] (0.000000,0.000000)  -- (0.000000,1.5)  ;
\draw[very thin] (0,0) -- (0,-3pt) node[below] {$0$};
\draw[very thin] (1,0) -- (1,-3pt) node[below] {$1$};
\draw[very thin] (0,1) -- (-3pt,1) node[left] {$1$}; 
    \draw (0.000000,0.833347)  [point];
    \draw (0.000000,1.111123)  [point];
    \draw (0.000000,1.250016)  [point];
    \draw (0.166669,0.626678)  [point];
    \draw (0.714296,0.102859)  [point];
    \draw (0.875011,0.000000)  [point];
\end{tikzpicture}
     \caption{Diagrams of $F=Z^8+(Y^5+XY^4)Z^3+X^5YZ^2+(X^7+X^{10})Z+Y^{10}$. Note that the right diagram has three distinct compact faces.}
\end{figure}

\section*{Acknowledgements}

The first author was supported by Project {\em Métodos Computacionales en Ál\-ge\-bra, D-módulos, teoría de la representación y Optimización (MTM2016-75024-P)} (Ministerio de Econom\'{\i}a y Competitividad). The second and third authors were supported by Project {\em Geometría  Aritmética, D-módulos y singularidades  (MTM\-2016–75027–P)} (Ministerio de Econom\'{\i}a y Competitividad) and Project {\em Singularidades, Geometría Algebraica Aritmética y Teoría de Representaciones: Estructuras y Métodos Diferenciales, Cohomológicos, Combinatorios y Computacionales (P12–FQM–2696)} (Jun\-ta de Andaluc\'{\i}a and FEDER).

The authors wish to express their gratitude to the referee, whose comments and suggestions helped improve the exposition and clarity of the paper.

\end{document}